\newcommand{\autorefcheckize}[1]{%
  \expandafter\let\csname @@\string#1\endcsname#1%
  \expandafter\DeclareRobustCommand\csname relax\string#1\endcsname[1]{%
    \csname @@\string#1\endcsname{##1}\wrtusdrf{##1}}%
  \expandafter\let\expandafter#1\csname relax\string#1\endcsname
}
\newcommand{\abs}[1]{\left\lvert#1\right\rvert}
\newcommand{\set}[1]{\left\{#1\right\}}
\newcommand{\hin}[2]{\left\langle#1,#2\right\rangle}
\newcommand{\kh}[1]{\left(#1\right)}
\newcommand{\field}[1]{\mathbb{#1}}
\newcommand{\R}{\field{R}}
\newcommand{\Com}{\field{C}}
\newcommand{\Lg}[1]{\mathrm{#1}}
\newcommand{\La}[1]{\mathfrak{#1}}
\newcommand{\dif}{\mathrm d}
\newcommand{\To}{\longrightarrow}
\newcommand{\Rmn}[1]{\uppercase\expandafter{\romannueral#1}}
\DeclareMathOperator{\Id}{Id}
\theoremstyle{plain}
\newtheorem{known}{Theorem}
\newtheorem{theorem}{Theorem}[section]
\newaliascnt{lem}{theorem}
\newtheorem{lem}[lem]{Lemma}
\newaliascnt{cor}{theorem}
\newtheorem{cor}[cor]{Corollary}
\newaliascnt{prop}{theorem}
\theoremstyle{remark}
\newtheorem{rem}{Remark}[section]
\theoremstyle{definition}
\newtheorem{eg}{Example}[section]
\numberwithin{equation}{section}
\begin{document}

\title[lower eigenvalue estimates]{Optimal lower eigenvalue estimates for Hodge-Laplacian and applications}
\thanks{This work is partially supported by National Natural Science Foundation
 of China (Grant No. 11601442) and
Fundamental Research Funds for the Central Universities (Grant No. 2682016CX114 ).}

\author{Qing Cui}
\address{School of Mathematics,
Southwest Jiaotong University, 611756
Chengdu, Sichuan, China}
\email{cuiqing@swjtu.edu.cn}

\author{Linlin Sun}
\address{School of Mathematics and Statistics, Wuhan University, Wuhan, 430072, China}
\address{Computational Science Hubei Key Laboratory, Wuhan University, Wuhan, 430072, China}
\email{sunll@whu.edu.cn}

\subjclass[2010]{58J50; 53C24; 53C40}

\keywords{Hodge Laplacian, eigenvalue estimate, rigidity theorem,
homology sphere theorem}

\begin{abstract}%

In this paper, we consider the eigenvalue problem for Hodge-Laplacian on
a Riemannian manifold $M$ isometrically immersed into another Riemannian manifold
$\bar M$ for arbitrary
codimension. We first assume the pull back Weitzenb\"{o}ck operator
(defined in Section 2) of
$\bar M$ bounded from below, and obtain an extrinsic lower bound
for the first eigenvalue of Hodge-Laplacian. As applications, we obtain some
rigidity results and a homology sphere theorem. Second, when the pull back  Weitzenb\"{o}ck operator
of $\bar M$ bounded from both sides, we give a lower bound of the first eigenvalue by
the Ricci curvature of $M$ and some extrinsic geometry.
As a consequence, we prove a weak Ejiri type theorem, that is, if the Ricci curvature
bounded from below pointwisely by a function of the norm square of the mean curvature vector, then
$M$ is a homology sphere.
In the end, we give an example to show that all the eigenvalue estimates and homology sphere theorems
are optimal when $\bar M$ has constant curvature.
\end{abstract}%

\maketitle

\section{Introduction}
Let $M^n$ be an $n$-dimensional Riemannian manifold. For each integer $0\le p\le n$, the Hodge-Laplacian (or Laplace–de Rham operator)
acting on $p$-forms of $M$ is defined by
$$
\Delta = d\delta +\delta d: \Omega^p(M)\to \Omega^p(M),
$$
where $d$ and $\delta$ are the differential and co-differential operator. Hodge-Laplacian is
an natural generalization (up to a sign) of Laplace-Beltrami operator acting on scalar functions (i.e., $0$-forms). For each
$0\le p\le n$, denote by $\lambda_{1,p}$ the first eigenvalue of Hodge-Laplacian, i.e.,
$$
\lambda_{1,p} = \inf_{0\neq\omega \in \Omega^p(M)}\set{\frac{\int_M\abs{d\omega}^2+\abs{\delta\omega}^2}{\int_M\abs{\omega}^2}}.
$$

Eigenvalue estimates for Laplace-Beltrami operator acting on scalar functions are intensively studied in a huge literature.
Compare to this, eigenvalue problems for Hodge-Laplaican attracted less attention, although
they also play an important role in revealing relations between geometry (curvature, etc.) and topology (cohomology, etc.) of
manifolds.
One of the difficulties of the eigenvalue estimates for Hodge-Laplacian
 is the algebraic complexity of $\Omega^p(M)$ (compare to $\Omega^0(M)$).

In recent years, a number of authors devoted to this problem (e.g. \cite{GueSav04, Kwo16, RauSav11, RauSav12, Sav09, Sav05,Sav14, Smo02}).
Among them, Guerini-Savo \cite{GueSav04}, Kwong \cite{Kwo16}, Raulot-Savo \cite{RauSav11, RauSav12} and  Savo \cite{Sav09}
 investigated eigenvalues for Hodge-Laplacian on a manifold with boundary;
 Savo  \cite{Sav05} and Smoczyk \cite{Smo02} studied eigenvalues for Hodge-Laplacian on submanifolds in Euclidean space or a sphere;
Raulot-Savo \cite{RauSav11} and Savo \cite{Sav14} also studied eigenvalues for Hodge-Laplacian on a hypersurface immersed into another
Riemannian manifold.

As we note from above, when the target manifold is not a space form, all the extrinsic results are of codimension one (be a hypersurface or  boundary of a Riemannian manifold).
It is natural to study eigenvalue problems of Hodge-Laplacian on a Riemannian manifold immersed into another with
arbitrary codimension.
To this end, in the present paper, we first give some optimal extrinsic lower eigenvalue estimates of Hodge-Laplacian
on a Riemannian manifold immersed into another with
arbitrary codimension. After that, as applications, we will prove some rigidity results, such as the homology sphere theorems.

Let $i: M^n\to \bar{M}^{n+m}$ be an isometric immersion from $n$-dimensional Riemannian manifold $M$ into
$(n+m)$-dimensional Riemannian manifold $\bar{M}$. Let $\nu\in T^\bot M$ be a unit normal vector,
and denote by $S_\nu$ the shape operator associated with $\nu$. Assume $\set{k_i}_{i=1}^n$ are the
principle curvatures of $S_\nu$. Denote by  $I_p$  the set of all $p$-multi-indices
$$
I_p = \set{\left.\set{j_1, \cdots, j_p}\right\vert 1\le j_1\le \cdots \le j_p\le n}.
$$
For a given $\alpha = \set{j_1, \cdots, j_p}\in I_p$, set $\alpha_\star = \set{1,\cdots, n}\backslash \alpha$,
and call
$$
K_\alpha = k_{j_1}+\cdots +k_{j_p}
$$
a
 {\it p-curvature} of $S_\nu$.
 Set
 \begin{equation*}
   \left\{\begin{aligned}
   \beta_p(x)=& \frac{1}{p(n-p)}\inf_{\alpha\in I_p} K_\alpha(x)K_{\alpha_\star}(x),\\
   \beta_p(M)=& \inf_{x\in M} \beta_p(x).
   \end{aligned}\right.
 \end{equation*}
When
codimension $m=1$ and the curvature operator of $\bar{M}$ is bounded from below, Savo \cite[Theorem 7]{Sav14}
obtained an optimal extrinsic lower bound of $\lambda_{1,p}$:
%

\begin{known}[Savo\cite{Sav14}]\label{TheoremA}
 Let $M^n$ be a closed hypersurface of $\bar{M}^{n+1}$, a manifold with
 curvature operator $\bar{\mathcal{R}}$ bounded from below by $c\in\mathbb R$. Let $ 1\le p \le \frac{n}{2}$. Then
$$
\lambda_{1,p}(M)\ge p(n-p+1)(c+\beta_p(M)),
$$
where   $\beta_p(M)$ is a constant defined above. If $M$ is a geodesic
sphere in a simply connected
manifold of constant curvature $c$, then equality holds.
\end{known}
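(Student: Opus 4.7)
The plan is to integrate the Bochner--Weitzenb\"{o}ck identity against a first eigenform on $M$, split the Weitzenb\"{o}ck term via the Gauss equation into an ambient contribution (bounded below using $\bar{\mathcal{R}}\ge c$) and an extrinsic shape-operator contribution (bounded below by $\beta_p(M)$), and then upgrade the classical factor $p(n-p)$ to the sharp $p(n-p+1)$ via the refined Kato inequality. By Hodge theory together with the commutations $d\Delta=\Delta d$ and $\delta\Delta=\Delta\delta$, one may choose an eigenform $\omega$ realizing $\lambda:=\lambda_{1,p}(M)$ that is either closed or coclosed. I will first treat the closed case $d\omega=0$: then $\Delta\omega=d\delta\omega=\lambda\omega$, whence $\int_M|\delta\omega|^2=\lambda\int_M|\omega|^2$.

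Integrating the Bochner identity $\Delta\omega=\nabla^*\nabla\omega+\mathcal{W}_p^M\omega$ against $\omega$ yields
\[
\lambda\int_M|\omega|^2 \;=\; \int_M|\nabla\omega|^2 \;+\; \int_M\langle\mathcal{W}_p^M\omega,\omega\rangle.
\]
The central pointwise step is to decompose $\mathcal{W}_p^M=\bar{\mathcal{W}}_p|_M+\mathcal{T}_p(S)$ using the Gauss equation, where $\bar{\mathcal{W}}_p|_M$ is the pullback of the ambient Weitzenb\"{o}ck operator and $\mathcal{T}_p(S)$ is a symmetric endomorphism of $\Lambda^p T^*M$ built algebraically from the shape operator $S=S_\nu$. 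The curvature operator hypothesis $\bar{\mathcal{R}}\ge c$ then gives the standard bound $\bar{\mathcal{W}}_p\ge p(n-p)c\cdot\Id$. The main technical obstacle is to show that, in an orthonormal frame $\{e_i\}$ diagonalizing $S$ with principal curvatures $k_1,\dots,k_n$, the induced basis $\{e_\alpha\}_{\alpha\in I_p}$ of $\Lambda^p T^*M$ diagonalizes $\mathcal{T}_p(S)$ with
\[
\langle \mathcal{T}_p(S)e_\alpha,e_\alpha\rangle \;=\; K_\alpha K_{\alpha_\star},
\]
so that the eigenvalues of $\mathcal{T}_p(S)$ are precisely the products parametrizing $\beta_p$. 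This identity follows by inserting the hypersurface Gauss equation $R^M_{ijkl}=\bar R_{ijkl}+S_{ik}S_{jl}-S_{il}S_{jk}$ into the explicit expression of the Weitzenb\"{o}ck curvature on $p$-forms and simplifying using $K_\alpha+K_{\alpha_\star}=\trace S$. Minimizing over $\alpha$ then gives the pointwise bound $\mathcal{W}_p^M\ge p(n-p)(c+\beta_p(M))\cdot\Id$ on $M$.

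To extract the additional $+1$, I will invoke the refined Kato inequality for closed $p$-forms, $|\nabla\omega|^2\ge\frac{1}{n-p+1}|\delta\omega|^2$, which comes from the $O(n)$-irreducible decomposition $T^*M\otimes\Lambda^p T^*M\cong\Lambda^{p+1}T^*M\oplus\Lambda^{p-1}T^*M\oplus U_p$. Integrating and combining with the Weitzenb\"{o}ck bound gives
\[
\lambda\int_M|\omega|^2 \;\ge\; \tfrac{\lambda}{n-p+1}\int_M|\omega|^2 + p(n-p)(c+\beta_p(M))\int_M|\omega|^2,
\]
which rearranges to $\lambda_{1,p}\ge p(n-p+1)(c+\beta_p(M))$. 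The coclosed case $\delta\omega=0$ is handled symmetrically using the dual refined Kato $|\nabla\omega|^2\ge\frac{1}{p+1}|d\omega|^2$ and yields $\lambda_{1,p}\ge(p+1)(n-p)(c+\beta_p(M))$, which is at least as strong because $1\le p\le n/2$ forces $(p+1)(n-p)\ge p(n-p+1)$. Optimality on a geodesic sphere in a simply connected space form of curvature $c$ is verified directly: all principal curvatures coincide, $\mathcal{T}_p(S)$ becomes a scalar multiple of $\Id$, and eigenforms built by restriction of ambient parallel forms saturate the estimate.
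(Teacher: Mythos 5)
Your argument is correct and is essentially Savo's original route, whereas the paper does not prove \autoref{TheoremA} directly but derives it as a corollary of its arbitrary-codimension generalization (\autoref{eigenest}), proved via \autoref{lem:Wp}. The two proofs agree on the analytic ``$+1$'' step: your refined Kato inequality for closed and coclosed forms is precisely the twistor decomposition $\abs{\nabla\omega}^2=\abs{P\omega}^2+\tfrac{1}{p+1}\abs{\dif\omega}^2+\tfrac{1}{n+1-p}\abs{\delta\omega}^2$ used in \autoref{lem:Wp}, and the elementary coefficient inequality $\tfrac{p}{p+1}\le\tfrac{n-p}{n+1-p}$ for $p\le n/2$ lets the paper treat all $\omega$ at once without splitting into closed versus coclosed eigenforms as you do. The genuine difference is in the algebraic bound on the extrinsic Weitzenb\"{o}ck term: you diagonalize the single shape operator $S_\nu$ and read off the eigenvalues $K_\alpha K_{\alpha_\star}$, which is specific to codimension one; the paper instead rewrites this term as $-\abs{\mathring{S}(\omega)-\tfrac{n-2p}{2}H\omega}^2+\tfrac{n^2}{4}\abs{H}^2\abs{\omega}^2$ and controls the operator norm of $\mathring{S}$ by $\tfrac{p(n-p)}{n}\abs{\mathring{B}}^2$, an argument that survives to arbitrary codimension (with $\gamma_p$ in place of $\beta_p$). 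Your diagonalization buys the sharp $\beta_p$ in the hypersurface case, while the paper's manipulation buys generality. One small inaccuracy: your remark that the coclosed bound $(p+1)(n-p)(c+\beta_p)$ is ``at least as strong'' than $p(n-p+1)(c+\beta_p)$ holds only when $c+\beta_p\ge 0$; when $c+\beta_p<0$ it is weaker, but then the claimed inequality is trivial anyway since $\lambda_{1,p}\ge 0$, so your conclusion is unaffected.
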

\begin{rem}
  By Poincar\'{e} duality,
$\lambda_{1,p}(M)=\lambda_{1,n-p}(M)$, for all $0\le p \le n$. Moreover, $\lambda_{1,0}=\lambda_{1,n}=0$. Therefore, we always assume
$1\le p\le \frac{n}{2}$ if there is no other explanation.
\end{rem}
The main tool that Savo used to prove the above theorem is the Bochner formula, that is,
for all $\omega \in \Omega^p(M)$,
\begin{align*}
\dfrac12\Delta\abs{\omega}^2&=\abs{\nabla\omega}^2
-\hin{\Delta\omega}{\omega}
+\hin{W^{[p]}(\omega)}{\omega},
\end{align*}
where $W^{[p]}: \Omega^p(M)\to \Omega^p(M)$ is usually called {\it($p$-th) Weitzenb\"{o}ck operator}.
When $p=1$, $W^{[1]}$ is nothing but the Ricci tensor. But when $2\leq p\leq n-2$, $W^{[p]}$ is complicated and
 is hard to be controlled in a general case.
However, it is crucial and necessary to control the term $\hin{W^{[p]}\kh{\omega}}{\omega}$
in eigenvalue estimates or in
other problems. One can
also define the Weitzenb\"{o}ck operator $\bar{W}^{[p]}$ of $\bar M$. Denote by $i^*\bar{W}^{[p]}$ the
{\it pull-back Weitzenb\"{o}ck operator}, which is the restriction of $\bar{W}^{[p]}$ on $\Omega^p(M)$.
One can check  (c.f. \cite{GalMey75}) that, $\bar{\mathcal{R}}\ge c$ implies $i^*\bar{W}^{[p]}\ge p(n-p)c$.

%
For higher codimension, $\beta_p(M)$ can not be defined as above since there are more than one normal directions. However, we have
\begin{theorem}\label{eigenest}
Suppose $M^n$ is a closed submanifold in $\bar{M}^{n+m}$
with the pull back Weitzenb\"{o}ck operator
$i^*\bar{W}^{[p]}\geq p(n-p)c $ for some constant $c$ and $1\le p\le \frac{n}{2}$.
 Then
\begin{align*}
\lambda_{1,p}(M)\geq&p(n-p+1)
\left(c+\gamma_p\right),
\end{align*}
where $$\gamma_p=\min_{x\in M}\left\{\left(-\dfrac{1}{n}\abs{\mathring{B}}^2
-\dfrac{(n-2p)\abs{H}}{\sqrt{np(n-p)}}
\abs{\mathring{B}}+\abs{H}^2\right)(x)\right\},$$
 $H$ is the mean curvature vector and $\mathring{B}$ is the
traceless part of the second fundamental form $B$.
Moreover, if $M$ is totally umbilical, then
$$
\lambda_{1,p}(M)\ge p(n-p+1)\min_M\kh{c+\abs{H}^2}.
$$
\end{theorem}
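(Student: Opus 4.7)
The strategy has two parts: first, establish the pointwise operator inequality $W^{[p]}\ge p(n-p)(c+\gamma_p)$ for the intrinsic Weitzenb\"ock operator on $M$; then invoke the Gallot--Meyer refined Kato argument, which converts such a bound into $\lambda_{1,p}\ge \tfrac{n-p+1}{n-p}\cdot p(n-p)(c+\gamma_p)=p(n-p+1)(c+\gamma_p)$. This conversion goes through because, by Hodge decomposition, one may assume the eigenform $\omega$ is exact; then $\int|\delta\omega|^2=\lambda_{1,p}\int|\omega|^2$, and the classical algebraic inequality $|\delta\omega|^2\le (n-p+1)|\nabla\omega|^2$ (from the irreducible decomposition of $T^*M\otimes\Lambda^p T^*M$), combined with the integrated Bochner identity
\[
\lambda_{1,p}\int_M|\omega|^2=\int_M|\nabla\omega|^2+\int_M\langle W^{[p]}\omega,\omega\rangle,
\]
delivers the desired factor. (The co-exact case yields the strictly larger factor $(p+1)/p$ for $p\le n/2$, so the exact case is binding.) Hence the entire content reduces to the pointwise operator bound on $W^{[p]}$.

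Via the Gauss equation, $W^{[p]}=i^*\bar W^{[p]}+T^{[p]}$, where for any local orthonormal normal frame $\{\nu_\alpha\}$ with shape operators $S_\alpha:=S_{\nu_\alpha}$,
\[
T^{[p]}=\sum_\alpha\bigl(\trace(S_\alpha)\,S_\alpha^{[p]}-(S_\alpha^{[p]})^2\bigr),
\]
and $S_\alpha^{[p]}$ denotes the canonical derivation extension of $S_\alpha$ to $\Lambda^pT^*M$ (whose eigenvalues in an $S_\alpha$-adapted frame are precisely the $p$-curvatures $K_I$ of the excerpt). Writing $S_\alpha=\tfrac{\trace(S_\alpha)}{n}I+\mathring S_\alpha$ so that $S_\alpha^{[p]}=\tfrac{p\,\trace(S_\alpha)}{n}I+\mathring S_\alpha^{[p]}$, and using $\sum_\alpha\trace(S_\alpha)^2=n^2|H|^2$ and $\sum_\alpha|\mathring S_\alpha|^2=|\mathring B|^2$, a direct expansion yields
\[
\langle T^{[p]}\omega,\omega\rangle=p(n-p)|H|^2|\omega|^2+\tfrac{n-2p}{n}\sum_\alpha\trace(S_\alpha)\langle\mathring S_\alpha^{[p]}\omega,\omega\rangle-\sum_\alpha|\mathring S_\alpha^{[p]}\omega|^2.
\]

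The algebraic heart is a sharp operator-norm lemma: for any traceless symmetric $\mathring S$ on $\mathbb{R}^n$ with eigenvalues $\mu_1,\dots,\mu_n$, each eigenvalue $\mu_I=\sum_{i\in I}\mu_i$ of $\mathring S^{[p]}$ (indexed by $I\in I_p$) satisfies $\mu_I^2\le \tfrac{p(n-p)}{n}|\mathring S|^2$, proved by Lagrange multipliers on the sphere $\{\trace=0,\,|\mathring S|=1\}$ (equality when the eigenvalues split into two constant blocks of sizes $p$ and $n-p$). Plugging this in, together with one further Cauchy--Schwarz in $\alpha$, bounds the cross term by $\tfrac{n-2p}{n}\sqrt{np(n-p)}\,|H||\mathring B||\omega|^2$ (using $p\le n/2$ so $n-2p\ge 0$) and $\sum_\alpha|\mathring S_\alpha^{[p]}\omega|^2$ by $\tfrac{p(n-p)}{n}|\mathring B|^2|\omega|^2$. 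Assembling the three contributions and factoring out $p(n-p)$ gives exactly $\langle T^{[p]}\omega,\omega\rangle\ge p(n-p)\gamma_p|\omega|^2$, hence $W^{[p]}\ge p(n-p)(c+\gamma_p)$ pointwise. In the totally umbilical case $\mathring B\equiv 0$, the cross and quadratic $|\mathring B|$-terms vanish, $\gamma_p=\min_M|H|^2$, and the stated strengthening is immediate. The principal obstacle I expect is pinning down the sharp constant $\sqrt{p(n-p)/n}$ for the operator norm of $\mathring S^{[p]}$, together with the careful bookkeeping required to bundle the three $|\mathring B|$-contributions into the single expression defining $\gamma_p$; the weighting of the cross term is delicate and depends essentially on $n-2p\ge 0$ and on the ordering of the two Cauchy--Schwarz applications, one within $\Lambda^p$ and one across the normal index $\alpha$.
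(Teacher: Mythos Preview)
Your proposal is correct and follows essentially the same route as the paper: reduce to a pointwise lower bound $W^{[p]}\ge p(n-p)(c+\gamma_p)$ via the Gauss equation and the splitting $S_\alpha^{[p]}=\tfrac{p\,\trace S_\alpha}{n}I+\mathring S_\alpha^{[p]}$, control $|\mathring S_\alpha^{[p]}\omega|$ by the sharp bound $\|\mathring S^{[p]}\|_{op}^2\le\tfrac{p(n-p)}{n}|\mathring S|^2$ (the paper reduces this to codimension one and cites Raulot--Savo, you sketch a direct Lagrange-multiplier proof), and then feed the Weitzenb\"ock bound into Gallot--Meyer. The only cosmetic difference is that the paper packages the Gallot--Meyer step using the twistor operator $P$ rather than the exact/co-exact dichotomy you describe, but the two arguments are equivalent.
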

The following corollary is a direct consequence of \autoref{eigenest}.
\begin{cor}\label{cor1}
  Assume the assumptions of \autoref{eigenest} hold, we also have the following
  eigenvalue estimates:
  \begin{itemize}
    \item[(i)] $\lambda_{1,p}(M) \ge p(n-p+1) \min_M\kh{c-\dfrac{n}{4p(n-p)}\abs{\mathring{B}}^2}$.
    \item[(ii)] $\lambda_{1,p}(M) \ge p(n-p+1) \min_M\kh{c-\dfrac{1}{2\sqrt{p(n-p)}}\abs{B}^2}$.
   \item[(iii)] If $n$ is even, then
    \begin{align*}
    \lambda_{1,n\slash 2}(M) \ge \dfrac{n(n+2)}{4}\min_{M}\kh{c
      -\dfrac{1}{n}\abs{\mathring{B}}^2 + \abs{H}^2}.
    \end{align*}
  \end{itemize}
\end{cor}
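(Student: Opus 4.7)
All three estimates should be immediate algebraic consequences of \autoref{eigenest}: it suffices to establish pointwise inequalities bounding $c+\gamma_p$ from below by the corresponding expressions in $|\mathring{B}|$, $|B|$, $|H|$, and then observe that the positive prefactor $p(n-p+1)$ preserves the inequality when one finally takes the infimum over $M$. The heart of the matter in each case is recognising that the ``loss'' between $\gamma_p$ and the simplified bound is a perfect square.

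For (i), the plan is to verify the identity
\[
-\frac{1}{n}|\mathring{B}|^2-\frac{(n-2p)|H|}{\sqrt{np(n-p)}}|\mathring{B}|+|H|^2+\frac{n}{4p(n-p)}|\mathring{B}|^2
=\left(|H|-\frac{n-2p}{2\sqrt{np(n-p)}}|\mathring{B}|\right)^{2}\ge 0,
\]
which comes down to the one-line computation $\frac{n}{4p(n-p)}-\frac{1}{n}=\frac{(n-2p)^{2}}{4np(n-p)}$. For (iii), one simply sets $p=n/2$: the middle term in $\gamma_p$ vanishes thanks to the factor $n-2p$, and the prefactor becomes $p(n-p+1)=n(n+2)/4$, so (iii) is a literal specialisation of \autoref{eigenest}.

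Item (ii) requires slightly more algebra, and is the only place where I expect any genuine obstacle. I would substitute $|\mathring{B}|^{2}=|B|^{2}-n|H|^{2}$ to reduce the target inequality to a homogeneous quadratic inequality in $a=|\mathring{B}|$ and $b=|H|$. Abbreviating $s=\sqrt{p(n-p)}$, the coefficients work out to $A=\frac{n-2s}{2ns}$ for $a^{2}$, $C=\frac{n+2s}{2s}$ for $b^{2}$, and $B=-\frac{n-2p}{\sqrt{n}\,s}$ for $ab$. The AM--GM inequality $2\sqrt{p(n-p)}\le n$ gives $A\ge 0$, and a direct computation shows
\[
B^{2}-4AC=\frac{(n-2p)^{2}-(n^{2}-4p(n-p))}{ns^{2}}=0.
\]
Hence the quadratic form is a perfect square, and in particular non-negative, which proves (ii). The only delicate point is this discriminant cancellation, but it is forced by exactly the same algebraic identity that drives (i), so no new input is really needed.
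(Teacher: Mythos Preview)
Your proof is correct and essentially matches the paper's: both derive all three parts as direct algebraic consequences of \autoref{eigenest}, with (iii) being the literal specialisation $p=n/2$ and (i), (ii) following from the observation that the slack is a perfect square. The only presentational difference is that the paper first records a one-parameter family of bounds (obtained by applying Young's inequality to the cross term, which is exactly your completing-the-square step) and then specialises the parameter to recover (i) and (ii), whereas you handle the two cases separately.
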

\begin{rem} (i)
It is worth pointing out that all the eigenvalue estimates in \autoref{eigenest} and
\autoref{cor1} are optimal. To see this, let $\bar M^{n+m}$  be a Riemannian manifold with
constant sectional curvature $c>0$, and $M^n$ be a geodesic sphere in $\bar M^{n+m}$. In this case,
$i^*W^{[p]}\equiv p(n-p) c$ and $M$ is totally umbilical. On the other hand, it is shown (c.f. \cite{GalMey75}) that
$\lambda_{1,p}(M) = p(n-p+1)\kh{c+\abs{H}^2}$. Thus, the eigenvalue estimates is optimal when
$M$ is umbilical. When $M$ is not umbilical,  the eigenvalue estimates are also optimal by
computing the first eigenvalue of Clifford torus (see the example in Appendix).  \\
(ii) When codiemsion $m=1$, Savo\cite{Sav14} obtained  \autoref{eigenest} and \autoref{cor1}. Moreover,  \autoref{eigenest} gives \autoref{TheoremA}.
\end{rem}

%

As an application, we have the following rigidity result.
\begin{theorem}\label{rigidthm}
Suppose $M^n$ is a closed submanifold in $\bar M^{n+m}$,
$p\in\set{1,\dotsc, n-1}, i^*\bar W^{[p]}\geq c\geq0$ and
$\abs{B}^2\leq\alpha(c,p,n,H)$,
then the $p$-th betti number $b_p\leq\binom{n}{p}$,
where
\begin{align*}
\alpha(c,p,n,H)\coloneqq nc+\dfrac{n^3\abs{H}^2}{2p(n-p)}-\dfrac{n\abs{n-2p}\abs{H}\sqrt{n^2\abs{H}^2+4cp(n-p)}}{2p(n-p)}.
\end{align*}
Moreover, if $b_p>0$, then $\vert B\vert^2\equiv\alpha(c,p,n,H)$. In particular, if $\chi(M)\neq 1+(-1)^n$, then $\vert B\vert^2\equiv\alpha(c,p,n,H)$ for some $1\leq p\leq n-1$.
\end{theorem}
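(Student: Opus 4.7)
The plan is to upgrade \autoref{eigenest} to its underlying pointwise Bochner form and combine it with Hodge theory through the identification $b_p(M) = \dim\mathcal{H}^p(M)$, where $\mathcal{H}^p(M)$ denotes the space of harmonic $p$-forms. The proof of \autoref{eigenest} in fact produces, for every $\omega \in \Omega^p(M)$, a pointwise inequality of the shape
\begin{equation*}
\tfrac{1}{2}\Delta\abs{\omega}^2 \ge \abs{\nabla\omega}^2 - \hin{\Delta\omega}{\omega} + p(n-p+1)\bigl(c + f_p(x)\bigr)\abs{\omega}^2,
\end{equation*}
where $f_p(x) = -\tfrac{1}{n}\abs{\mathring B}^2(x) - \tfrac{\abs{n-2p}\abs{H}(x)}{\sqrt{np(n-p)}}\abs{\mathring B}(x) + \abs{H}^2(x)$; the absolute value around $n-2p$ appears once one invokes Poincar\'{e} duality $\lambda_{1,p}=\lambda_{1,n-p}$ to reduce to the case $p \le n/2$. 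A direct algebraic manipulation, solving the quadratic $c + f_p(x) \ge 0$ for $\abs{\mathring B}(x)$ and using $\abs{B}^2 = \abs{\mathring B}^2 + n\abs{H}^2$, shows that the hypothesis $\abs{B}^2 \le \alpha(c,p,n,H)$ is pointwise equivalent to $c + f_p(x) \ge 0$ on $M$.

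Next, I would apply this to any harmonic $\omega \in \mathcal{H}^p(M)$. Integrating over the closed manifold $M$ kills $\int_M\Delta\abs{\omega}^2$ by Stokes and uses $\hin{\Delta\omega}{\omega} = 0$, leaving
\begin{equation*}
0 \ge \int_M \abs{\nabla\omega}^2 + p(n-p+1)\int_M\bigl(c + f_p(x)\bigr)\abs{\omega}^2.
\end{equation*}
Both integrands are non-negative, so each must vanish. In particular $\nabla\omega \equiv 0$, so every harmonic $p$-form is parallel. Evaluation at a base point $x_0$ then injects $\mathcal{H}^p(M)$ into $\Lambda^p T_{x_0}^{\ast} M$, which has dimension $\binom{n}{p}$; hence $b_p(M) \le \binom{n}{p}$.

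For the rigidity clause, if $b_p > 0$ pick a nonzero harmonic (hence parallel) $\omega$; then $\abs{\omega}^2$ is a positive constant on the connected manifold $M$, and the vanishing of $\int_M (c+f_p(x))\abs{\omega}^2$ forces $c + f_p \equiv 0$ on $M$, which is exactly $\abs{B}^2 \equiv \alpha(c,p,n,H)$. Finally, for a closed connected oriented manifold $b_0 = b_n = 1$, so $\chi(M) = 1 + (-1)^n + \sum_{p=1}^{n-1}(-1)^p b_p$; the assumption $\chi(M) \ne 1 + (-1)^n$ forces $b_p > 0$ for some $1 \le p \le n-1$, and the previous step then yields $\abs{B}^2 \equiv \alpha(c,p,n,H)$ for that $p$.

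The principal obstacle is to extract the genuine pointwise Bochner--Weitzenb\"{o}ck inequality from the proof of \autoref{eigenest}, rather than merely its integrated consequence: it is this pointwise version that forces $\nabla\omega = 0$ for each individual harmonic form, not only an averaged bound. Once the pointwise estimate and the quadratic identity relating $c+f_p$ to $\abs{B}^2$ and $\alpha$ are secured, the parallel-transport argument bounding $\dim\mathcal{H}^p$ by $\binom{n}{p}$ and the Euler-characteristic bookkeeping are standard.
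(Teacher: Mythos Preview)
Your approach is correct and essentially the same as the paper's: both translate $\abs{B}^2\le\alpha(c,p,n,H)$ into pointwise nonnegativity of the Weitzenb\"{o}ck term, deduce via the integrated Bochner identity that every harmonic $p$-form is parallel, and finish with the evaluation map plus the Euler-characteristic bookkeeping (the paper phrases the middle step via the twistor operator $P$ of \autoref{lem:Wp} rather than the raw Bochner formula, but for harmonic $\omega$ the two are equivalent). One minor correction: in your displayed pointwise inequality the coefficient should be $p(n-p)$ rather than $p(n-p+1)$---the extra factor only appears at the eigenvalue level after the twistor refinement---though this is immaterial here since only the sign of $c+f_p$ is used.
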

The following two corollaries are direct consequences of the above rigidity theorem.
\begin{cor}If the assumptions of \autoref{rigidthm} hold, and moreover if
 $\abs{B}^2\leq\alpha(c,1,n,H)$ and $b_p>0$ for some $1<p<n-1$, then either $\mathring{B}\equiv0$ or $M$ is minimal satisfying $\abs{B}^2\equiv nc$.
\end{cor}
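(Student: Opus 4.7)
My plan is to apply \autoref{rigidthm} to promote the hypothesis $\abs{B}^2\le\alpha(c,p,n,H)$ into a pointwise equality, and then to deduce $\abs{H}\equiv0$ from a sharp algebraic comparison between $\alpha(c,p,n,H)$ and $\alpha(c,1,n,H)$. By Poincar\'{e} duality and the manifest symmetry $\alpha(c,p,n,H)=\alpha(c,n-p,n,H)$, I will assume throughout that $1<p\le n/2$. Applying \autoref{rigidthm} with this $p$, the assumption $b_p>0$ forces $\abs{B}^2\equiv\alpha(c,p,n,H)$ on $M$; combined with the extra assumption $\abs{B}^2\le\alpha(c,1,n,H)$, this yields $\alpha(c,p,n,H)\le\alpha(c,1,n,H)$ pointwise on $M$.

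The key ingredient for the pointwise comparison will be the identity $\alpha(c,p,n,H)-n\abs{H}^2=x_p^2$, where $x_p\ge0$ is the positive root of $\frac{x^2}{n}+\kappa_p\abs{H}\,x=c+\abs{H}^2$ with $\kappa_p\coloneqq\frac{n-2p}{\sqrt{np(n-p)}}$; rationalizing gives
\[
x_p=\frac{2n(c+\abs{H}^2)}{\sqrt{n^2\kappa_p^2\abs{H}^2+4n(c+\abs{H}^2)}+n\kappa_p\abs{H}},
\]
which is manifestly strictly decreasing in $\kappa_p$ when $\abs{H}>0$. A direct expansion yields the polynomial identity
\[
(n-2)^2p(n-p)-(n-2p)^2(n-1)=n^2(p-1)(n-p-1),
\]
whose positivity for $1<p<n-1$ is immediate from its two evident roots $p=1$ and $p=n-1$. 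This rearranges to $\kappa_p<\kappa_1$, hence $x_p>x_1$ whenever $\abs{H}>0$, so $\alpha(c,p,n,H)>\alpha(c,1,n,H)$ at every point with $\abs{H}\ne0$, contradicting the pointwise inequality from the first step. Therefore $\abs{H}\equiv0$ on $M$.

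With $\abs{H}\equiv0$, $M$ is minimal and $\abs{B}^2\equiv\alpha(c,p,n,0)=nc$, giving the second alternative ``$M$ is minimal satisfying $\abs{B}^2\equiv nc$''. The first alternative $\mathring{B}\equiv0$ corresponds to the degenerate subcase $c=0$, in which $\abs{B}^2\equiv0$ forces $B\equiv0$ (so $M$ is totally geodesic). I expect the main obstacle to be the algebraic bookkeeping in the second paragraph: verifying the quadratic reformulation $\alpha(c,p,n,H)-n\abs{H}^2=x_p^2$ and the polynomial factorization $(n-2)^2p(n-p)-(n-2p)^2(n-1)=n^2(p-1)(n-p-1)$; once both are in hand, the remaining comparisons and the dichotomy follow by elementary monotonicity.
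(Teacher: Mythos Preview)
Your argument is correct. The paper gives no proof of this corollary (it is labeled a ``direct consequence'' of \autoref{rigidthm}), and your route --- promoting $\abs{B}^2\le\alpha(c,p,n,H)$ to equality via \autoref{rigidthm}, then forcing $H\equiv0$ from the strict inequality $\alpha(c,p,n,H)>\alpha(c,1,n,H)$ whenever $\abs{H}>0$ --- is exactly the natural way to unpack that phrase; the two algebraic identities you flag both check out. One small observation: you have in fact shown that the second alternative always holds, so the corollary's disjunctive phrasing is mildly redundant --- the case $\mathring{B}\equiv0$ arises only when $c=0$, as a degenerate instance of ``minimal with $\abs{B}^2\equiv nc=0$''.
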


\begin{cor}If the assumptions of \autoref{rigidthm} hold, and moreover if
 $n=0 \ \ (\mod4)$, $\abs{B}^2\leq\alpha(c,1,n,H)$ and the signature $sig(M)$ of $M$ is nonzero, then either $\mathring{B}\equiv0$ or $M$ is minimal satisfying $\abs{B}^2\equiv nc$.
\end{cor}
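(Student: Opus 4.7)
The plan is to reduce this corollary to the preceding one by showing that the nonvanishing of the signature forces $b_{n/2}(M)$ to be positive, and then invoking the previous rigidity corollary with $p=n/2$.

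First I would recall the standard Hodge-theoretic interpretation of the signature. Since $n=4k$, Poincar\'e duality provides a symmetric nondegenerate intersection form on $H^{n/2}(M;\R)$, which via Hodge theory is identified with the space of harmonic $(n/2)$-forms. The Hodge star $\ast$ restricted to this middle space satisfies $\ast^2=\Id$, so the harmonic forms split as $\mathcal{H}^{n/2}=\mathcal{H}^+\oplus\mathcal{H}^-$ into $\pm1$ eigenspaces whose dimensions $b_{n/2}^{\pm}$ satisfy $b_{n/2}(M)=b_{n/2}^{+}+b_{n/2}^{-}$ while $\mathrm{sig}(M)=b_{n/2}^{+}-b_{n/2}^{-}$. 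Hence $\mathrm{sig}(M)\neq 0$ forces $b_{n/2}(M)\geq\abs{\mathrm{sig}(M)}\geq 1$.

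Second, since $n\equiv 0\pmod 4$ and $n\geq 4$, the integer $p=n/2$ satisfies $1<p<n-1$. All hypotheses of the preceding corollary are therefore met at this $p$: the Weitzenb\"ock bound $i^{*}\bar W^{[p]}\geq c\geq 0$ is inherited from the standing assumption of \autoref{rigidthm}, the pinching $\abs{B}^{2}\leq\alpha(c,1,n,H)$ is given, and $b_{p}>0$ has just been established from the signature hypothesis. Applying that corollary at $p=n/2$ yields the dichotomy that either $\mathring B\equiv 0$ or $M$ is minimal with $\abs{B}^{2}\equiv nc$, which is the claim.

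The only substantive point is the translation from $\mathrm{sig}(M)\neq 0$ to $b_{n/2}(M)>0$, and this is a textbook Hodge-theoretic fact valid precisely because $n\equiv0\pmod4$ (for $n\equiv 2\pmod 4$ the middle-form pairing is skew and the signature is zero, so the hypothesis itself would be vacuous). Once this is in place, the corollary is a direct specialization of the previous one, and no further obstacle arises.
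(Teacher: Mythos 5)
Your argument is correct and is the intended route: the paper states this corollary as a direct consequence of \autoref{rigidthm} without spelling out the details, and the step you supply---that for $n\equiv 0\pmod 4$ the Hodge star splits $\mathcal H^{n/2}$ into $\pm 1$ eigenspaces so that $\mathrm{sig}(M)=b^+_{n/2}-b^-_{n/2}$, whence $\mathrm{sig}(M)\neq 0$ forces $b_{n/2}>0$ with $1<n/2<n-1$---is exactly what reduces this statement to the preceding corollary. No gaps.
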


Moreover, if the strict inequality holds, we have a homology sphere theorem as follows.
\begin{theorem}\label{rationalsphere1}
If the assumptions of \autoref{rigidthm} hold, and moreover if
$\abs{B}^2<\alpha(c,1,n,H)$ and $M$ is simply connected,
then $M$ is a homology sphere, i.e.,
$H^i(M,\mathbb R)=0$ for all $1\leq i\leq n-1$.
\end{theorem}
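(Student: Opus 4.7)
My plan is to apply \autoref{rigidthm} separately at each intermediate degree $p\in\{1,\dotsc,n-1\}$. The ``moreover'' clause there states that $b_p>0$ forces the pointwise identity $\abs{B}^2\equiv\alpha(c,p,n,H)$, so any strict inequality $\abs{B}^2<\alpha(c,p,n,H)$ on $M$ yields $b_p=0$, i.e.\ $H^p(M,\R)=0$ via Hodge theory. The entire argument therefore reduces to upgrading the single hypothesis $\abs{B}^2<\alpha(c,1,n,H)$ to the corresponding strict inequality for every $p\in\{1,\dotsc,n-1\}$.

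The key point is the pointwise comparison
\begin{equation*}
\alpha(c,1,n,H)(x)\leq\alpha(c,p,n,H)(x),\quad 1\leq p\leq n-1,\ x\in M.
\end{equation*}
By the symmetry $\alpha(c,p,n,H)=\alpha(c,n-p,n,H)$, it is enough to prove monotonicity of $\alpha$ in $p$ on $[1,n/2]$. I would handle the square root in the definition of $\alpha$ by rationalizing: writing $A=n\abs{H}$, $y=n-2p\geq 0$, and $R=\sqrt{A^2+c(n^2-y^2)}$, multiplication of numerator and denominator by the conjugate gives the equivalent expression
\begin{equation*}
\alpha(c,p,n,H)=nc+\frac{2A(A^2-cy^2)}{nA+yR}.
\end{equation*}
Differentiating in $y$, using $R^2=A^2+c(n^2-y^2)$ to clear the $1/R$ coming from $\partial_y R=-cy/R$, and then collecting the $A^4$, $A^2cn^2$, $A^2cy^2$ and $c^2n^2y^2$ terms, one checks that the numerator of $\partial_y\alpha$ assembles into a perfect square, producing the surprisingly clean identity
\begin{equation*}
\partial_y\alpha(c,p,n,H)=-\frac{2A(AR+cny)^2}{R(nA+yR)^2}\leq 0.
\end{equation*}
Since $c\geq 0$, $A\geq 0$, $y\geq 0$ on the relevant range, the right-hand side is manifestly non-positive, so $\alpha$ is non-increasing in $y$ and hence non-decreasing in $p$ on $[1,n/2]$, which establishes the comparison.

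With the comparison in hand, the conclusion is immediate: $\abs{B}^2<\alpha(c,1,n,H)\leq\alpha(c,p,n,H)$ for every $1\leq p\leq n-1$, hence $b_p=0$ and $H^i(M,\R)=0$ for $1\leq i\leq n-1$. The main (and essentially only) obstacle is the algebraic verification of the monotonicity of $\alpha(c,p,n,H)$ in $p$; the square root in its definition obscures the sign of the $p$-derivative, and the rationalization trick above is the decisive simplification that converts an opaque sign check into a perfect-square identity. The simply-connectedness hypothesis plays no logical role in producing the stated $\R$-coefficient vanishing, and is included only so that the conclusion matches the customary usage of the term ``homology sphere.''
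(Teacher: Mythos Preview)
Your proposal is correct and follows the same route as the paper: reduce to showing $\alpha(c,1,n,H)\le\alpha(c,p,n,H)$ for every $1\le p\le n-1$, then invoke \autoref{rigidthm} (its ``moreover'' clause) to conclude $b_p=0$. The paper simply asserts this minimum identity inside the proof of \autoref{rigidthm} without justification, whereas your rationalization $\alpha=nc+\dfrac{2A(A^2-cy^2)}{nA+yR}$ and the resulting perfect-square derivative $\partial_y\alpha=-\dfrac{2A(AR+cny)^2}{R(nA+yR)^2}$ actually supply a clean proof of it.
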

\begin{rem}
About \autoref{rationalsphere1}, we should remark that,
\begin{itemize}
\item If $\bar M$ is a space form, then Shiohama-Xu\cite{ShiXu97} obtained a topological
sphere theorem  under the same condition, i.e., the condition $\abs{B}^2<\alpha(c,1,n,H)$
implies that $M$ is homeomorphic to a sphere.
\item Applying the rational Hurewicz theorem, in the conclusion, one actually have
 $\pi_i(M,\mathbb Q)=0$ for all $1\le i\le n-1$ (which can be called a {\it rational homotopy sphere}).
\end{itemize}
\end{rem}

It is worth noting that, constant $\gamma_p$ in \autoref{eigenest}
depends on $\abs{H}^2$ and $\abs{\mathring{B}}^2$. But by the Gauss equation \eqref{eq:gauss}, we have
\begin{align*}
  Scal_M=\sum_{i,j} \bar{R}_{ijij} + n(n-1)\abs{H}^2 -\abs{\mathring{B}}^2.
 \end{align*}
 Moreover, by the  definition of $i^*\bar W^{[p]}$,
$$\hin{i^*\bar W^{[1]} (\eta^i)}{\eta^i} =\sum_{j=1}^n\bar{R}_{ijij}.$$
We see that $\gamma_p$ actually depends on the scalar curvature of $M$, $\abs{H}^2$ and $ i^*\bar W^{[1]}$.
In this direction, assume $i^*\bar W^{[p]}$ bounded from below and $i^*\bar W^{[1]}$ bounded from above, we 
also obtain a lower eigenvalue estimate for Hodge-Laplacian
by the Ricci curvature of $M$ and $\abs{H}^2$. 

\begin{theorem}\label{thm:eigenricci}Suppose $M^n$ is a closed submanifold of $\bar{M}^{n+m}$ with $i^*\bar{W}^{[p]}\geq p(n-p)c_*$ and $i^*\bar{W}^{[1]}\leq (n-1)c^*$, where $c^*\ge c_*$ are two constants, then for $1\leq p\leq n/2 $,
\begin{align*}
&\dfrac{n-p}{n-p+1}\dfrac{n-2}{(n+2)p(n-p)-n^2}\lambda_{1,p}(M)\\
\geq&\left(Ric_{min}-(n-1)\left(c^*+\abs{H}^2\right)+\dfrac{(n-2)p(n-p)}{(n+2)p(n-p)-n^2}\left(c_*+\abs{H}^2\right)\right).
\end{align*}
\end{theorem}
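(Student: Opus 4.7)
The plan is to follow the Bochner--Weitzenb\"ock strategy used for \autoref{eigenest}, but replace the pure $|B|$-bound on $\hin{W^{[p]}\omega}{\omega}$ by one that trades the $|B|^2$-contribution against the intrinsic Ricci curvature, via an appropriate weighted combination of the two Weitzenb\"ock hypotheses and the Gauss equation.

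By Poincar\'e duality I may assume $1\le p\le n/2$. By Hodge decomposition, a $\lambda_{1,p}$-eigenform splits into exact and co-exact parts each of which is again an eigenform, so I pick a closed (exact) one $\omega\in\Omega^p(M)$ with $d\omega=0$, $\Delta\omega=\lambda_{1,p}\omega$ (the co-exact case gives an even stronger bound for $p\le n/2$ by the same argument). Integration of the Bochner formula gives
\begin{equation*}
\lambda_{1,p}\int_M\abs{\omega}^2 = \int_M\abs{\nabla\omega}^2 + \int_M\hin{W^{[p]}\omega}{\omega}.
\end{equation*}
The orthogonal decomposition of $\nabla\omega\in T^*M\otimes\Lambda^pT^*M$ into its $d$-part, $\delta$-part, and trace-free part yields the pointwise inequality $\abs{\nabla\omega}^2\ge\frac{1}{n-p+1}\abs{\delta\omega}^2$. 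Since $d\omega=0$ and $\int\abs{\delta\omega}^2=\lambda_{1,p}\int\abs{\omega}^2$ by integration by parts, this produces $\int\abs{\nabla\omega}^2\ge\frac{\lambda_{1,p}}{n-p+1}\int\abs{\omega}^2$, whence
\begin{equation*}
\frac{n-p}{n-p+1}\lambda_{1,p}\int_M\abs{\omega}^2 \ge \int_M\hin{W^{[p]}\omega}{\omega}.
\end{equation*}
This accounts for the prefactor $\frac{n-p}{n-p+1}$ appearing in the theorem.

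It remains to lower-bound $\hin{W^{[p]}\omega}{\omega}$ pointwise. Using the Gauss equation, exactly as in the proof of \autoref{eigenest}, the intrinsic Weitzenb\"ock operator decomposes as
\begin{equation*}
\hin{W^{[p]}\omega}{\omega} = \hin{i^*\bar{W}^{[p]}\omega}{\omega} + \mathcal{B}_p(\omega,B,H),
\end{equation*}
with $\mathcal{B}_p$ a quadratic expression in $B$ acting on $\omega$. To trade the $|B|^2$-terms for Ricci curvature I bring in the traced Gauss identity
\begin{equation*}
Ric(e_i,e_i)=\hin{i^*\bar{W}^{[1]}\theta^i}{\theta^i}+n\hin{B(e_i,e_i)}{H}-\sum_j\abs{B(e_i,e_j)}^2
\end{equation*}
together with the bounds $i^*\bar{W}^{[p]}\ge p(n-p)c_*$ and $i^*\bar{W}^{[1]}\le(n-1)c^*$. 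An affine combination (with $p$- and $n$-dependent weights) of these three relations, chosen so that all purely $|B|^2$-residuals cancel, yields the pointwise estimate
\begin{equation*}
\hin{W^{[p]}\omega}{\omega}\ge\frac{(n+2)p(n-p)-n^2}{n-2}\Bigl(Ric_{\min}-(n-1)(c^*+\abs{H}^2)+\tfrac{(n-2)p(n-p)}{(n+2)p(n-p)-n^2}(c_*+\abs{H}^2)\Bigr)\abs{\omega}^2.
\end{equation*}
Integrating and combining with the Kato display above proves the theorem, using that $(n+2)p(n-p)-n^2>0$ for $n\ge3$ and $1\le p\le n/2$.

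The hard part will be the last algebraic step: writing $\mathcal{B}_p(\omega,B,H)$ explicitly in a frame diagonalising the shape operators at a point (as in the proof of \autoref{eigenest}) and determining the unique weights that simultaneously kill the $|B|^2$-residual and produce the stated coefficients. As a sanity check, for $p=1$ the theorem specialises to $\frac{n-1}{n}\lambda_{1,1}\ge Ric_{\min}-(n-1)(c^*-c_*)$, recovering Lichnerowicz when $c^*=c_*$; for $p=n/2$ it reads $\lambda_{1,n/2}\ge\frac{n(n+2)}{4}\bigl(Ric_{\min}-(n-1)(c^*+\abs{H}^2)+c_*+\abs{H}^2\bigr)$, which is saturated on the round geodesic sphere in any $\bar M$ of constant curvature, confirming sharpness.
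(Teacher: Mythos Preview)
Your overall architecture is right and matches the paper: the Bochner formula together with the twistor decomposition gives exactly $\tfrac{n-p}{n-p+1}\lambda_{1,p}\int|\omega|^2\ge\int\hin{W^{[p]}\omega}{\omega}$ (this is \autoref{lem:Wp}), and the remaining task is a pointwise lower bound on $\hin{W^{[p]}\omega}{\omega}$ in terms of $Ric_{\min}$, $c^*$, $c_*$, $|H|^2$. Where your proposal falls short is precisely the step you flag as ``the hard part'': you have not carried it out, and the method you sketch---diagonalising the shape operators at a point and choosing weights so that the $|B|^2$-residuals cancel---does not work as stated. In codimension $m>1$ the shape operators $A^{\alpha}$ cannot in general be simultaneously diagonalised, and no affine combination of the scalar identities you list will kill the residual, because the extrinsic term acting on $\omega$ is $\abs{\mathring{S}(\omega)-\tfrac{n-2p}{2}H\omega}^2$, an operator-theoretic quantity, not a scalar multiple of $|\mathring{B}|^2$.

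The paper handles this with two ingredients you are missing. First, a Hodge-dual identity: writing the extrinsic term both for $\omega$ and for $*\omega$ and taking the convex combination with weights $\tfrac{n-p-1}{n-2}$ and $\tfrac{p-1}{n-2}$ produces the coefficient $\tfrac{(n+2)p(n-p)-n^2}{n-2}$ and reduces everything to bounding $\bigl|\mathring{S}(\omega)-\tfrac{n-2}{2}pH\omega\bigr|^2$. Second, an algebraic lemma (\autoref{lem:algebraic-eriji}): for any symmetric matrices $A^{\alpha}$ one has $\sum_\alpha|A^{\alpha}\omega|^2\le p^2\,\bigl|\sum_\alpha(A^{\alpha})^2\bigr|_2\,|\omega|^2$. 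This replaces the impossible simultaneous diagonalisation by diagonalising the \emph{single} symmetric matrix $\sum_\alpha\bigl(\mathring{A}^{\alpha}-\tfrac{n-2}{2}H^{\alpha}\Id\bigr)^2$, whose largest eigenvalue is then bounded via the traced Gauss equation by $(n-1)c^*+\tfrac{n^2}{4}|H|^2-Ric_{\min}$. Without these two devices the pointwise inequality you assert cannot be obtained; your proposal identifies the target but not a mechanism that reaches it.
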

As an application, we will give a new {\it Ejiri type homology sphere theorem}.
 There are several type of
sphere theorems depending on the curvature assumptions
 added on the submanifold or the target manifold. These curvature assumptions include
  pinched sectional curvature,
 bounded Ricci curvature, etc.
In Section 4, we will restrict our attention to the case that the Ricci curvature of the
submanifold is bounded from below.
As far as we know, the first such type result was given by Ejiri in 1979
for minimal submanifolds of a sphere.
Recently, Gu-Xu generalize Ejiri's result to submanifolds of space forms with parallel mean curvature vector.

\begin{known}[Ejiri\cite{Eji79}, Gu-Xu\cite{XuGu13}]
Let $M$ be an $n(\geq3)$-dimensional complete submanifold with parallel mean curvature vector $H$ in $F^{n+m}(c)$ with $c+\abs{H}^2>0$. If the Ricci curvature of $M$ satisfies
\begin{align*}
Ric_M\geq(n-2)\left(c+\abs{H}^2\right),
\end{align*}
then $M$ is either the totally geodesic submanifold $\mathbb{S}^n\left(\tfrac{1}{\sqrt{c+\abs{H}^2}}\right)$, the Clifford torus $\mathbb{S}^l\left(\tfrac{1}{\sqrt{2(c+\abs{H}^2)}}\right)\times\mathbb{S}^l\left(\tfrac{1}{\sqrt{2(c+\abs{H}^2)}}\right)$ in $\mathbb{S}^{n+1}\left(\tfrac{1}{\sqrt{c+\abs{H}^2}}\right)$ with $n=2l$, or $\Com P^2\left(\tfrac{4}{3}(c+\abs{H}^2)\right)$ in $\mathbb{S}^7\left(\tfrac{1}{c+\abs{H}^2}\right)$. Here $\Com P^2\left(\tfrac{4}{3}(c+\abs{H}^2)\right)$ denotes the $2$-dimensional complex projective space minimally immersed into $\mathbb{S}^7\left(\tfrac{1}{c+\abs{H}^2}\right)$ with constant holomorphic sectional curvature $\tfrac{4}{3}(1+\abs{H}^2)$.
\end{known}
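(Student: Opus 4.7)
The plan is to combine a Simons-type Laplacian identity for the traceless second fundamental form $\mathring{B}$ with the Gauss equation, extract a pointwise pinching inequality from the Ricci lower bound, and then classify the equality case via the theory of parallel (isoparametric) submanifolds of space forms.

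First, because $c+\abs{H}^2>0$ and $\mathrm{Ric}_M\geq(n-2)(c+\abs{H}^2)>0$, Myers' theorem makes $M$ compact, so the strong maximum principle is available for $\abs{\mathring{B}}^2$. Next, I would write the Simons identity for $\abs{\mathring{B}}^2$ in $F^{n+m}(c)$; since $H$ is parallel in the normal bundle, all terms involving $\nabla^\perp H$ drop out, leaving
\begin{align*}
\tfrac12\Delta\abs{\mathring{B}}^2=\abs{\nabla\mathring{B}}^2+\mathcal{R}(\mathring{B})+\mathcal{Q}(\mathring{B}),
\end{align*}
where $\mathcal{R}(\mathring{B})$ is linear (absorbing the ambient constant curvature $c$ together with the mean-curvature contribution), and $\mathcal{Q}(\mathring{B})$ is the standard quartic in the traceless shape operators $\{\mathring{S}_\alpha\}$, controlled by $\sum_{\alpha,\beta}\bigl(\trace(\mathring{S}_\alpha\mathring{S}_\beta)^2+\trace[\mathring{S}_\alpha,\mathring{S}_\beta]^2\bigr)$.

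Second, I would use the Gauss equation to rewrite part of $\mathcal{Q}$ via intrinsic curvatures of $M$ and then inject the hypothesis $\mathrm{Ric}_M\geq(n-2)(c+\abs{H}^2)$. This should yield a Simons-type pointwise inequality
\begin{align*}
\tfrac12\Delta\abs{\mathring{B}}^2\geq\abs{\nabla\mathring{B}}^2+\abs{\mathring{B}}^2\,P\bigl(\abs{\mathring{B}}^2,\,c+\abs{H}^2\bigr),
\end{align*}
with $P$ an explicit polynomial whose largest nonnegative root matches the pinching value common to the Clifford torus and the Veronese embedding. Evaluating at a maximum point of $\abs{\mathring{B}}^2$ on the compact $M$ then forces either $\mathring{B}\equiv 0$, or $\abs{\mathring{B}}^2$ to be the sharp positive constant throughout $M$ with $\nabla\mathring{B}\equiv 0$.

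Third, I would classify the two outcomes. If $\mathring{B}\equiv 0$, then $M$ is totally umbilic with parallel $H$ in the space form $F^{n+m}(c)$, hence the round sphere $\mathbb{S}^n\kh{1/\sqrt{c+\abs{H}^2}}$. In the non-umbilic equality case, $\nabla\mathring{B}\equiv 0$ makes $M$ an extrinsically parallel submanifold. The hardest step is to narrow these parallel candidates down to exactly the Clifford torus $\mathbb{S}^l\times\mathbb{S}^l\subset\mathbb{S}^{n+1}$ (codimension one, $n=2l$) and the minimal Veronese embedding $\Com P^2\hookrightarrow\mathbb{S}^7$: this requires a careful spectral analysis of the simultaneous eigenvalues of the shape operators $\{\mathring{S}_\alpha\}$, combined with Chern--do~Carmo--Kobayashi rigidity in the codimension-one case and Ferus's classification of parallel submanifolds of space forms in higher codimension, both matched against the exact pinching value of $\abs{\mathring{B}}^2$ extracted in the second step.
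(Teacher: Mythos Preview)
The paper does not prove this statement. It is stated as a \emph{known} theorem attributed to Ejiri \cite{Eji79} (minimal case) and Gu--Xu \cite{XuGu13} (parallel mean curvature case), and is quoted solely as background and motivation for the authors' own weak Ejiri-type \autoref{thm:eriji}. No proof, sketch, or argument for it appears anywhere in the paper, so there is nothing to compare your proposal against.

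That said, your outline is broadly faithful to the classical strategy of the original references: compactness via Myers, the Simons identity for the second fundamental form (with the simplifications coming from parallel $H$), a pointwise pinching inequality obtained by combining the Gauss equation with the Ricci lower bound, and then a rigidity analysis of the equality case. The sketch is reasonable as a roadmap, though the third step (reducing the parallel equality case to precisely the Clifford torus and the Veronese $\Com P^2$) is where most of the real work lies and is treated only at the level of naming the relevant classification theorems; if you were actually writing this up you would need to be much more explicit there. But again, none of this is in the present paper: the authors' own contribution is a different, Bochner-based route to a weaker (homology-sphere) conclusion under more general ambient curvature hypotheses.
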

Gu-Xu \cite{XuGu13} also obtain the following topological sphere theorem without the assumption of parallel mean curvature vector.
\begin{known}[Gu-Xu\cite{XuGu13}]\label{GXthm}Let $M$ be an $n$-dimensional closed submanifold with mean curvature vector $H$ in $F^{n+m}(c)$ with $c\geq0$. If the Ricci curvature of $M$ satisfies
\begin{align}\label{eq:condition-eriji}
Ric_M>(n-2)\left(c+\abs{H}^2\right),
\end{align}
then $M$ is homoemorphic to a sphere.
\end{known}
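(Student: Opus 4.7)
The plan is a pinching-to-sphere argument routed through positive isotropic curvature and the Micallef--Moore topological sphere theorem, using the Gauss equation to convert the intrinsic Ricci hypothesis into an extrinsic bound on the second fundamental form.

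First, I would translate the Ricci pinching into a pointwise extrinsic bound on $B$. By the Gauss equation, for a unit tangent vector $v$ and an adapted orthonormal tangent frame $\{e_i\}$,
\begin{align*}
Ric_M(v,v) = (n-1)c + n\langle B(v,v),H\rangle - \sum_{i=1}^n |B(v,e_i)|^2.
\end{align*}
Taking $v$ to minimize $Ric_M$, splitting $B=\mathring{B}+g\otimes H$, and estimating the mixed term by Cauchy--Schwarz (exactly as in the derivation of $\alpha(c,1,n,H)$ for \autoref{rigidthm}), the hypothesis $Ric_M>(n-2)(c+\abs{H}^2)$ converts into the strict pointwise inequality $\abs{B}^2<\alpha(c,1,n,H)$. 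In particular $Ric_M>0$ pointwise, so by Myers $M$ is compact with finite $\pi_1$, and after lifting to the universal cover $\tilde M$ (which remains isometrically immersed in $F^{n+m}(c)$ with the same pointwise pinching) one reduces to the simply connected case.

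Second, I would verify that under this extrinsic pinching the intrinsic curvature of $M$ is pointwise PIC. For an isotropic complex 2-plane $\zeta_1=e_1+\mathrm{i}e_2,\ \zeta_2=e_3+\mathrm{i}e_4$, the Gauss equation expresses the isotropic curvature as a $c$-independent positive contribution of the form $2(c+\abs{H}^2)$, plus an indefinite quartic expression in the components of $\mathring B$ across the four frame indices. A matrix Cauchy--Schwarz argument bounds this indefinite part by a multiple of $\abs{\mathring B}^2\le\abs{B}^2-n\abs{H}^2$; the threshold $\alpha(c,1,n,H)$ is precisely tuned so that the bound from Step 1 forces the positive part to dominate. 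With PIC in hand, Micallef--Moore gives that $\tilde M$ is homeomorphic to $S^n$ when $n\ge 4$, while $n=3$ is handled by Hamilton's classical Ricci flow theorem using the strict Ricci positivity. A final deck-transformation argument, using that isometric free actions on $S^n$ preserving a strictly PIC metric must be trivial (in fact, strict Ricci positivity on $\tilde M$ already forces $|\Gamma|$ small, and the pinching excludes the surviving quotients), yields $M\cong_{\mathrm{top}}S^n$.

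The main obstacle is the PIC verification in Step 2 when the codimension $m\ge 2$: the shape operators $A_{\xi_\alpha}$ in different normal directions need not commute, so the quartic form in $B$ entering the isotropic curvature must be estimated by a genuinely multi-component matrix inequality rather than a scalar one. The decisive algebraic identity is $\sum_\alpha\abs{\mathring A_{\xi_\alpha}}^2=\abs{\mathring B}^2$, which collapses the sum over normal directions into a single scalar and allows one to pair it cleanly against the positive $c+\abs{H}^2$ contribution; the strict inequality in the hypothesis then propagates to strict positivity of every isotropic curvature, which is exactly what Micallef--Moore needs.
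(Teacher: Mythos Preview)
Your strategy differs from the paper's: the paper (following Gu--Xu) verifies the Lawson--Simons condition \eqref{eq:L-S} directly from the Ricci hypothesis, obtaining that there are no stable integral $p$-currents for $0<p<n$, and then invokes the generalized Poincar\'e conjecture. No intermediate bound on $\abs{B}^2$ and no PIC computation enter.

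Your Step~1, however, contains a genuine error: the Ricci hypothesis does \emph{not} imply $\abs{B}^2<\alpha(c,1,n,H)$. Summing your Gauss--Ricci identity over a full orthonormal frame and using $\trace\mathring B=0$ yields only $\abs{\mathring B}^2<n(c+\abs{H}^2)$, i.e.\ $\abs{B}^2<nc+2n\abs{H}^2$, which strictly exceeds $\alpha(c,1,n,H)$ whenever $c+\abs{H}^2>0$ and $n\geq3$; using only the direction $v$ minimizing $Ric$ gives even less, since it bounds a single row $\sum_i\abs{B(v,e_i)}^2$ rather than $\abs{B}^2$. Concretely, take $n=4$, $c=1$, codimension one, with principal curvatures $(2.8,0,0,0)$: then every sectional curvature equals $1$, so $Ric_{ii}=3>2.98=(n-2)(c+\abs{H}^2)$, yet $\abs{B}^2=7.84$ while $\alpha(1,1,4,H)\approx5.07$. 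Thus the bridge from the Ricci pinching to the extrinsic pinching collapses, and with it your route to PIC via $\abs{B}^2<\alpha$. A PIC-based proof is possible, but it must estimate the isotropic curvature expression directly from the Ricci bound via Gauss, bypassing any global control of $\abs{B}^2$; this is a different computation from what you sketch. Separately, your fundamental-group endgame (``strict Ricci positivity forces $\abs{\Gamma}$ small, and the pinching excludes the surviving quotients'') is not a theorem and would itself require substantial work.
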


The original version of Gu-Xu's theorem assume that $n\geq4$. The case $n=2$ is a consequence of Gauss-Bonnet formula. The case $n=3$ is a consequence of Lawson-Simons theorem and Perelman's solution of Poincar\'{e} conjecture.

The key idea to prove \autoref{GXthm} is to claim that there is no
stable integral $p$-currents for $0<p<n$ under the assumption \eqref{eq:condition-eriji}. The {\it $p$-th weak Ricci curvature} of the $p$-plane $e_1\wedge e_2\wedge\dotsm\wedge e_p$ introduced by Gu-Xu\cite{GuXu12} is defined by
\begin{align*}
Ric(e_1\wedge e_2\wedge\dotsm\wedge e_p)\coloneqq\sum_{i=1}^pRic_{ii}.
\end{align*}
One can verify that $Ric(e_1\wedge e_2\wedge\dotsm\wedge e_p)$ is well defined, i.e., it is depending only on the $p$-plane $e_1\wedge e_2\wedge\dotsm\wedge e_p$. With an obvious modification of original results of Gu-Xu \cite{XuGu13} and Gu-Leng-Xu\cite{XuLenGu14}, one can obtain the following Theorem (for readers' convenience, we list a proof in Section 4).
\begin{known}\cite{XuLenGu14,XuGu13}\label{gclaim}
Let $M$ be an $n(\geq4)$-dimensional closed submanifold  with mean curvature vector $H$  in $F^{n+m}(c)$ with $c\geq0$.
If
\begin{align*}
\dfrac{Ric_{(p)}}{p}>\left(n-1-\dfrac{(n-2)p(n-p)}{(n+2)p(n-p)-n^2}\right)\left(c+\abs{H}^2\right),\quad 1<p<n-1,
\end{align*}
where $Ric_{(p)}$ is the lower bound of the $p$-th Ricci curvature, then there is no stable integral $p$-currents.
\end{known}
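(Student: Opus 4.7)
The plan is to argue by contradiction using the Lawson-Simons second variation technique exactly as in \cite{XuGu13,XuLenGu14}; the only modification is a bookkeeping one, replacing the lower bound of the ordinary Ricci curvature by the lower bound $Ric_{(p)}$ of the $p$-th Ricci curvature. Suppose there is a stable integral $p$-current $T$ in $M$. By standard regularity theory, on an open dense set the support of $T$ is a smooth oriented $p$-submanifold $N\subset M$, and stability is equivalent to $\delta^2\mathrm{Mass}(V)\geq 0$ for every compactly supported normal variation $V$ of $N$ in $M$.

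At each $x\in N$, I fix an orthonormal frame $\{e_1,\ldots,e_n\}$ of $T_xM$ with $\{e_1,\ldots,e_p\}$ tangent to $N$ and $\{e_{p+1},\ldots,e_n\}$ normal to $N$ in $M$. For each pair $(i,\alpha)$ with $i\in\{1,\ldots,p\}$ and $\alpha\in\{p+1,\ldots,n\}$, I would build a normal test field $V_{i\alpha}$ by projecting the position vector fields of the ambient space form $F^{n+m}(c)$ onto the normal bundle of $N$ in $M$, with the weights dictated by the Gu-Xu prescription. Applying the Simons-type second variation formula to each $V_{i\alpha}$, summing over $(i,\alpha)$, and using the Gauss equation of $M\hookrightarrow F^{n+m}(c)$ to rewrite the ambient sectional curvatures in terms of $c$, the mean curvature $H$, the shape operators, and the intrinsic Ricci tensor $Ric^M$ contracted along $T_xN$, I obtain an identity
\begin{align*}
\sum_{i=1}^p\sum_{\alpha=p+1}^n I(V_{i\alpha})=\int_N\Phi\,d\mathrm{vol}_N,
\end{align*}
where $\Phi$ is an explicit pointwise quantity on $N$ and $I(\cdot)$ denotes the Jacobi form arising from $\delta^2\mathrm{Mass}$.

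Completing the square on the tangent-tangent, tangent-normal and normal-normal blocks of the shape operators relative to the splitting $T_xN\oplus(T_xN)^{\perp}_{T_xM}$, one deduces the pointwise estimate
\begin{align*}
\Phi&\leq p\left(n-1-\frac{(n-2)p(n-p)}{(n+2)p(n-p)-n^2}\right)(c+\abs{H}^2)-\sum_{i=1}^p Ric^M(e_i,e_i)\\
&\leq p\left(n-1-\frac{(n-2)p(n-p)}{(n+2)p(n-p)-n^2}\right)(c+\abs{H}^2)-Ric_{(p)},
\end{align*}
the constant $\kappa=\frac{(n-2)p(n-p)}{(n+2)p(n-p)-n^2}$ being exactly the sharp Cauchy-Schwarz constant produced by that block decomposition. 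Under the hypothesis the right side is strictly negative on $N$, so the summed second variation is strictly negative, contradicting the stability of $T$. The main obstacle is not the second variation formula itself, which is classical, but the choice of weights in the test fields $V_{i\alpha}$ so that the algebraic completion of squares produces the sharp constant $\kappa$; any non-optimal weighting gives only a weaker pinching threshold. Once the Gu-Xu/Gu-Leng-Xu weights are in place, the remainder of the argument is a routine algebraic identity combined with the Gauss equation.
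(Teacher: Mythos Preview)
Your outline is correct in spirit and reaches the same algebraic core as the paper, but the packaging differs and your description mislocates one key ingredient. The paper does not redo any second variation: it quotes the Lawson--Simons/Xin theorem as a black box, so the entire task reduces to verifying the pointwise inequality
\[
\sum_{i\le p}\sum_{j>p}\sum_\alpha\bigl(2(h_{ij}^\alpha)^2-h_{ii}^\alpha h_{jj}^\alpha\bigr)<p(n-p)c
\]
for every orthonormal frame---no test fields, no currents, no integration. More importantly, the ``weights'' are not placed on any $V_{i\alpha}$; they arise in a purely algebraic step applied to this fixed quantity. The left-hand side, rewritten via $\mathring h$, is expressed in two identical ways by completing the square around $\tfrac{n-2}{2}pH^\alpha$ (using $\sum_{i\le p}\mathring h_{ii}^\alpha$) and around $\tfrac{n-2}{2}(n-p)H^\alpha$ (using $\sum_{j>p}\mathring h_{jj}^\alpha$); one then takes the convex combination of these two expressions with coefficients $\tfrac{n-p-1}{n-2}$ and $\tfrac{p-1}{n-2}$. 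It is this combination, followed by Cauchy--Schwarz on the squared sums and the Gauss equation to convert the second-fundamental-form terms into $(n-1)(c+|H|^2)-Ric_{ii}$ summed over $i\le p$, that produces the sharp constant $\kappa$ and brings in $Ric_{(p)}$. So your claim that $\kappa$ comes from a tuned choice of test fields is a misreading of where the optimization actually happens; once that is relocated to the algebraic side, your argument is the paper's.
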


Note that the target manifold in the above three results are all of constant curvature.
In Section 4, as an application of \autoref{thm:eigenricci}, we generalize \autoref{GXthm} and  \autoref{gclaim} to
a more general case, that is, $\bar M$ is not necessarily of constant curvature.
\begin{theorem}\label{thm:eriji}Suppose $M^n$ is a closed submanifold of $\bar{M}^{n+m}$ with $i^*\bar{W}^{[p]}\geq p(n-p)c_*$ and $i^*\bar{W}^{[1]}\leq (n-1)c^*$, where $c^*\ge c_*$ are two constants. If
\begin{align*}
Ric>(n-1)\left(c^*+\abs{H}^2\right)-\dfrac{(n-2)p(n-p)}{(n+2)p(n-p)-n^2}\left(c_*+\abs{H}^2\right),
\end{align*}
holds for some $0<p<n$, then the $p$-th betti number is zero.
In particular,  if $M$ is simply connected, and
\begin{equation}\label{sharpric}
Ric>
\left\{\begin{aligned}
  &(n-1)\left(c^*+\abs{H}^2\right)-\left(c_*+\abs{H}^2\right),\quad n \ \ \text{is even},\\
  &(n-1)\left(c^*+\abs{H}^2\right)-\frac{(n-2)(n^2-1)}{n^3-2n^2-n-2}\kh{c_*+\abs{H}^2}, \quad n \ \ \text{is odd},
\end{aligned}\right.
\end{equation}
then $M$ is a homology sphere.
\end{theorem}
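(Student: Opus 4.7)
The plan is to deduce \autoref{thm:eriji} from the eigenvalue lower bound of \autoref{thm:eigenricci} together with Hodge theory: on a closed Riemannian manifold $b_p(M)$ equals the dimension of the space of harmonic $p$-forms, so $\lambda_{1,p}(M)>0$ immediately yields $b_p(M)=0$.

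For the first assertion, I would rearrange \autoref{thm:eigenricci} to isolate $\lambda_{1,p}(M)$:
\begin{align*}
\lambda_{1,p}(M)\geq\dfrac{(n-p+1)\bigl[(n+2)p(n-p)-n^2\bigr]}{(n-p)(n-2)}\Big[Ric_{\min}-(n-1)(c^*+\abs{H}^2)+f(p)(c_*+\abs{H}^2)\Big],
\end{align*}
where $f(p):=\dfrac{(n-2)p(n-p)}{(n+2)p(n-p)-n^2}$. The prefactor is strictly positive for $1\leq p\leq n/2$, since $(n+2)p(n-p)\geq(n+2)(n-1)=n^2+n-2>n^2$ whenever $n\geq 3$. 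The pointwise Ricci hypothesis of \autoref{thm:eriji} then forces the bracket to be strictly positive, so $\lambda_{1,p}(M)>0$ and Hodge theory gives $b_p(M)=0$. Poincar\'e duality $b_p=b_{n-p}$ handles the range $n/2<p<n$.

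For the homology sphere conclusion, simple connectivity gives $b_1(M)=0$, and Poincar\'e duality yields $b_{n-1}(M)=0$, so it remains to establish $b_p(M)=0$ for each integer $2\leq p\leq n-2$. By the first assertion it suffices to verify the Ricci hypothesis for every such $p$; the strictest such hypothesis in the family corresponds to the smallest value of $f(p)$. Setting $y=p(n-p)$ one has $f=\tfrac{(n-2)y}{(n+2)y-n^2}$ with derivative $-(n-2)n^2/[(n+2)y-n^2]^2<0$, so $f$ is strictly decreasing in $y$, and minimising $f$ amounts to maximising $p(n-p)$ over integers $p\in[2,n-2]$. This maximum is $n^2/4$ at $p=n/2$ when $n$ is even (giving $f=1$) and $(n^2-1)/4$ at $p=(n\pm 1)/2$ when $n$ is odd (giving $f=\tfrac{(n-2)(n^2-1)}{n^3-2n^2-n-2}$). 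These are precisely the coefficients of $c_*+\abs{H}^2$ in \eqref{sharpric}, so \eqref{sharpric} implies the Ricci hypothesis of the first assertion for every intermediate $p$, and $M$ is a homology sphere.

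The main obstacle is essentially this combinatorial optimisation to pin down the extremal $p$; the analytic content is fully absorbed by \autoref{thm:eigenricci}. One implicit assumption is $c_*+\abs{H}^2\geq 0$, so that minimising $f$ indeed yields the strongest Ricci bound; this is the natural Ejiri-type regime and is built into the formulation of \eqref{sharpric}.
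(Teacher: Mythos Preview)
Your argument is correct and follows exactly the route the paper indicates: the paper's own proof is the single line ``By \autoref{thm:eigenricci} and a similar argument as the proof of \autoref{rationalsphere1}'', and you have simply written out what that line means---positivity of the prefactor, Hodge theory to kill $b_p$, Poincar\'e duality for $p>n/2$, and the optimisation of $f(p)=\dfrac{(n-2)p(n-p)}{(n+2)p(n-p)-n^2}$ over the middle range to identify the extremal cases in \eqref{sharpric}. Your observation that the monotonicity-in-$y$ argument tacitly needs $c_*+\abs{H}^2\geq 0$ is a fair caveat the paper leaves implicit.
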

\begin{rem}
\begin{enumerate}
\item Suppose the sectional curvature of $\bar M$ is bounded below by $\bar K_{\min}$ and above by $\bar K_{\max}$, then we can take
\begin{align*}
c^*=&(n-1)\bar K_{\max},\\
c_*=&\dfrac{2[n/2]+1}{3}\left(\bar K_{\min}-\dfrac{2[n/2]-2}{2[n/2]+1}\bar K_{\max}\right).
\end{align*}
Therefore, our assumption is indeed weaker than constant curvature assumption.
\item The condition \eqref{sharpric} is sharp for all $p$ and all $n$
(no matter $n$ is even or odd) when $\bar M=F^{n+m}(c)$ (see the example in Appendix).

\end{enumerate}

\end{rem}
We call the above result a {\it weak Ejiri type theorem}. It is weak in the sense that
$M$ is just a homology sphere in our conclusion.
Therefore, it can be seen as a generalization
of \autoref{GXthm} in the rational homotopy sense.
We emphasize that the proof of \autoref{thm:eriji}
bases on the Bochner's method which is quite
different from Ejiri's and Gu-Xu's.

The paper is organized as follows. In Section 2, we set up notation and terminology, and review some
of the standard facts on submanifolds geometry and  Hodge-Laplacian.
In Section 3, we give the proofs of \autoref{eigenest} and  \autoref{rigidthm}. We also give
another two applications of \autoref{eigenest}.
In Section 4, we give the proof of \autoref{thm:eigenricci} and  \autoref{thm:eriji}.
In the Appendix, we calculate an example of Clifford torus to show that the eigenvalue estimates
and sphere theorems are all optimal when the target manifold is the standard sphere.

{\bf Acknowledgements}: The authors would like to thank Professor Gu Juanru for helpful suggestions.

\section{Preliminaries}
In this section, we first recall some of the standard facts on submanifold geometry.

Let $i:M^n\to \bar{M}^{n+m}$ be an isometric immersion from a closed $n$-dimensional
Riemannian manifold $M$ to an $(n+m)$-dimensional Riemannian manifold $\bar{M}$. Let $e_1, \cdots, e_n, \nu_1, \cdots,\nu_m$ be an orthonormal frame on $\bar{M}$ such that $e_1, \cdots, e_n$ are tangent to $M$ and $\nu_1, \cdots,\nu_m$ are perpendicular to $M$, and $\eta^1, \cdots, \eta^n$ be the dual of $e_1, \cdots, e_n$. Let $R$ (resp. $\bar{R}$) be the (0,4)-type curvature tensor of $M$ (resp. $\bar{M}$), and $\mathcal{R}:\Lambda^2TM\to \Lambda^2TM$ be the curvature operator defined by
$$
\langle \mathcal{R}(e_i\wedge e_j),e_k\wedge e_l\rangle = R(e_i,e_j,e_k,e_l)\eqqcolon R_{ijkl}.
$$
From now on, we assume the Latin subscripts (or superscripts) $i,j,k,l,\cdots$  range from 1 to $n$, and the Greek subscripts (or superscripts) $\alpha,\beta, \gamma, \cdots$ range from 1 to $m$, and we will adopt the Einstein summation rule.
The second fundamental form and the mean curvature vector are given by
$$
B=h^\alpha_{ij} \eta^i\otimes\eta^j\otimes\nu_\alpha,\quad H=\frac{1}{n}\sum_i h_{ii}^\alpha\nu_\alpha\eqqcolon H^\alpha\nu_\alpha.
$$
and write $\mathring{B} = B - H\otimes g$ which is the traceless part of $B$, where $g$ is the metric on $M$.
Let $A$ be the shape operator defined by
$$
\hin{B(X,Y)}{\nu}= \hin{A^{\nu}(X)}{Y}, \ \  \text{for all }\ \  X,Y\in TM \ \ \text{and }\ \ \nu\in T^\bot M,
$$
Write $$
A^\alpha:= A^{\nu^\alpha}\quad \text{and} \quad \mathring{A}^\alpha := A^\alpha - g.$$
Recall the  Gauss equation
\begin{align}\label{eq:gauss}
R_{ijkl}=\bar R_{ijkl}+\sum_{\alpha=1}^m\left(h_{ik}^{\alpha}h_{jl}^{\alpha}-h_{il}^{\alpha}h_{jk}^{\alpha}\right).
\end{align}

Second, we summarize the relevant material on Hodge-Laplacian and some facts of its first eigenvalue.

Let $\Delta$ be the Hodge-Laplacian, i.e., $\Delta=\dif\delta+\delta\dif$. Since $[\dif,\Delta]=[\delta,\Delta]=0$, we have
\begin{align*}
\Delta:\dif\Omega^p(M)\To\dif\Omega^p(M),\\
\Delta:\delta\Omega^p(M)\To\delta\Omega^p(M).
\end{align*}
Let $\lambda_{1,p}^e$ and $\lambda_{1,p}^{ce}$ be the first eigenvalue of $\Delta$ acting on the exact and co-exact $p$-form on $M$ respectively. It is easy to check that the first eigenvalue $\lambda_{1,p}$ of $\Delta$ satisfying
\begin{align*}
\lambda_{1,p}\leq\min\set{\lambda_{1,p}^e,\lambda_{1,p}^{ce}}.
\end{align*}
By Hodge decomposition, we know that
\begin{align*}
\lambda_{1,p}=\min\set{\lambda_{1,p}^e,\lambda_{1,p}^{ce}},
\end{align*}
provided $H^p(M,\R)=0$.
 By Hodge duality,
\begin{align*}
\lambda_{1,p}^{e}=\lambda_{1,n-p}^{ce},\quad \lambda_{1,p}=\lambda_{1,n-p}.
\end{align*}
Thus, by differentiating eigenforms, we obtain that if $H^p(M,\R)=0$, then
\begin{align*}
\lambda_{1,p-1}^{ce}\leq\lambda_{1,p}^{e},\quad\lambda_{1,p+1}^{e}\leq\lambda_{1,p}^{ce}.
\end{align*}
In particular, if $H^{p-1}(M,\R)=H^{p}(M,\R)=0$, we have
\begin{align*}
\lambda_{1,p-1}^{ce}=\lambda_{1,p}^{e}.
\end{align*}
Moreover, if $H^{p}(M,\R)=0$, then
\begin{align*}
\min\set{\lambda_{1,p-1},\lambda_{1,p+1}}\leq\lambda_{1,p}.
\end{align*}
For example,
\begin{align*}
\lambda_{1,p}^{e}\left(\Lg{S}^n(1)\right)=p(n-p+1),\quad\lambda_{1,p}^{ce}\left(\Lg{S}^n(1)\right)=(p+1)(n-p).
\end{align*}

Third, we briefly sketch the Weitzenb\"{o}ck formula and Bochner formula for differential forms.

For every $p$-form $\omega$ on $M$, using the local orthonormal frame,
the Weitzenb\"{o}ck operator $W^{[p]}:\Omega^p(M)\to \Omega^p(M)$ is given by
(c.f. \cite{Jos11}),
$$
W^{[p]}(\omega)=\eta^i\wedge\iota_{e_j}R(e_i,e_j)\omega.
$$
Similarly, the pull back Weitzenb\"{o}ck operator $i^*\bar W^{[p]}:\Omega^p(M)\to \Omega^p(M)$  is given by
$$
i^*\bar W^{[p]}(\omega)=\eta^i\wedge\iota_{e_j}\bar R(e_i,e_j)\omega.
$$
The following two  formulas for $p$-forms are well known,
%
\begin{align}\label{Wformula}
\Delta\omega&=\nabla^\star\nabla\omega+W^{[p]}(\omega),\\
\label{Bformula}\dfrac12\Delta\abs{\omega}^2&=\abs{\nabla\omega}^2
-\hin{\Delta\omega}{\omega}
+\hin{W^{[p]}(\omega)}{\omega},
\end{align}
where   $\nabla^\star\nabla$ is the connection Laplacian.
Equalities (\ref{Wformula}) and (\ref{Bformula}) are usually called Weitzenb\"{o}ck formula and Bochner formula.
A direct computation gives (c.f. \cite{LawMic89})
\begin{align*}
\hin{W^{[p]}(\omega)}{\omega}=\dfrac14\hin{\mathcal{R}(\theta_I)}{\theta_J}\hin{\La{ad}_{\theta_I}\omega}{\La{ad}_{\theta_J}\omega},
\end{align*}
where $\{\theta_I\}$ is a local orthonormal frame of $\Lambda^2TM$.

In the end of this section, let us recall the following theorem due to Lawson-Simons \cite{LawSim73} ($c>0$) and Xin \cite{Xin84} ($c=0$).
\begin{known}[Lawson-Simons, Xin]Suppose $M^n\subset F^{n+m}(c), c\geq0$ and for every orthonormal frame $\set{e_i}$ of $TM$,
\begin{align}\label{eq:L-S}
\sum_{i=1}^p\sum_{j=p+1}^n\sum_{\alpha=1}^m\left(2\left(h_{ij}^{\alpha}\right)^2-h_{ii}^{\alpha}h_{jj}^{\alpha}\right)<p(n-p)c,
\end{align}
then there is  no stable integral $p$-currents, where $F^{n+m}(c)$ is the $(n+m)$-dimensional space form with sectional curvature $c$.
\end{known}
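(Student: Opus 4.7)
The plan is to apply the Lawson--Simons averaging argument to the second variation formula for stable integral $p$-currents. First I would realize the ambient space form linearly: when $c>0$, embed $F^{n+m}(c)$ isometrically into $\R^{n+m+1}$ as the round sphere of radius $1/\sqrt{c}$; when $c=0$, this step is trivial. This embedding provides a canonical family of test vector fields on $M$, namely the tangential projections $E_A^\top$ to $F^{n+m}(c)$ of the standard parallel orthonormal frame $E_1,\dotsc,E_{n+m+1}$ of the ambient linear space. I would then use these as variational vector fields for a purported stable integral $p$-current $T$ supported in $M$.

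Next, I would invoke the second variation formula for mass. For a variation of $T$ by the flow of a smooth vector field $X$ on $F^{n+m}(c)$, the quadratic form $\delta^2_X T$ decomposes, at an approximate regular point with tangent $p$-plane spanned by $e_1,\dotsc,e_p$ and orthogonal complement inside $TM$ spanned by $e_{p+1},\dotsc,e_n$, into three pieces: a Dirichlet-type term in $\nabla^\perp X$, a pointwise algebraic term involving the second fundamental form, and an ambient sectional curvature contribution. The crucial observation is that the $E_A$ are parallel on the ambient linear space, so their covariant derivatives in $F^{n+m}(c)$ are controlled by the shape operator of the spherical (or identity) embedding alone, and the Dirichlet contributions must collapse after summation over $A$.

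The core computation is the pointwise averaging identity $\sum_{A=1}^{n+m+1}E_A\otimes E_A=\Id$ on the ambient linear space, which upon restricting to the three mutually orthogonal subbundles (tangent to $T$, tangent to $M$ but normal to $T$, and normal to $M$ in $F^{n+m}(c)$) converts $\sum_A\delta^2_{E_A^\top}T$ into the explicit pointwise expression
\begin{equation*}
-\int_T\kh{p(n-p)c-\sum_{i=1}^{p}\sum_{j=p+1}^{n}\sum_{\alpha=1}^{m}\kh{2(h_{ij}^\alpha)^2-h_{ii}^\alpha h_{jj}^\alpha}}.
\end{equation*}
The factor $p(n-p)c$ comes from the ambient curvature contribution (in the $c>0$ case it appears because the restriction of a constant field to the sphere has a non-zero Hessian determined by the position vector), while the second-fundamental-form quadratic form arises from the shape operator correction in the second variation.

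Finally, under hypothesis \eqref{eq:L-S} the integrand is strictly positive on the support of $T$, so $\sum_A\delta^2_{E_A^\top}T<0$, forcing $\delta^2_{E_A^\top}T<0$ for at least one index $A$ and contradicting the stability of $T$. The hard part will be the careful bookkeeping of the three distinct normal directions and verifying that the averaging correctly collapses the Dirichlet-type term $\abs{\nabla^\perp X}^2$ into a purely algebraic remainder; a secondary technical point is that the second variation must be applied at the level of rectifiable currents via approximate tangent planes rather than smooth submanifolds, but this is standard in the Lawson--Simons--Xin framework and does not introduce new analytical difficulties beyond those already treated in \cite{LawSim73} and \cite{Xin84}.
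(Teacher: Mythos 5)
The paper does not prove this theorem; it states it as a known result and cites Lawson--Simons \cite{LawSim73} for $c>0$ and Xin \cite{Xin84} for $c=0$, so there is no in-paper argument to compare against. Your sketch faithfully reconstructs the original averaging strategy from those references: linearly realize $F^{n+m}(c)$, take tangential projections $E_A^\top$ of the parallel coordinate frame as test fields in the second variation of mass of the stable current, and use the identity $\sum_A E_A\otimes E_A=\Id$ to collapse the Dirichlet-type terms (which are algebraic here, since $\nabla E_A^\top$ is controlled solely by the shape operator of the round embedding) into the pointwise quantity $-\left(p(n-p)c-\sum_{i,j,\alpha}\bigl(2(h_{ij}^\alpha)^2-h_{ii}^\alpha h_{jj}^\alpha\bigr)\right)$, whose strict negativity of the averaged second variation under hypothesis \eqref{eq:L-S} forces some individual second variation to be negative, contradicting stability. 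The structural points you flag --- the three-way orthogonal decomposition of the ambient tangent space and the passage to approximate tangent planes for rectifiable currents --- are precisely the technical points handled in \cite{LawSim73} and \cite{Xin84}, so your plan is sound and matches the standard proof.
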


\section{Eigenvalue estimate and its applications}
In this section, we give the proofs of \autoref{eigenest} and \autoref{rigidthm} and
their applications.

To prove \autoref{eigenest}, we need the following lemma which is due to Gallot and Meyer \cite{GalMey75}, and we give a proof here for completeness.
\begin{lem}\label{lem:Wp} Let $M^n$ be an $n$-dimensional closed Riemannian manifold. For $1\le p\le \frac{n}{2}$, assume the
Weitzenb\"{o}ck operator
  $W^{[p]}\ge p(n-p)\Lambda$ for some $\Lambda >0$, then
  $$
  \lambda_{1,p}(M) \ge p(n-p+1)\Lambda.
  $$
\end{lem}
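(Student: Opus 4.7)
The plan is to combine the integrated Bochner formula \eqref{Bformula} with a sharp algebraic refinement of the Kato inequality, after using Hodge theory to reduce to exact or co-exact eigenforms. Since $W^{[p]}\ge p(n-p)\Lambda>0$, \eqref{Bformula} forces $H^p(M,\R)=0$, so from the discussion in Section~2 we have $\lambda_{1,p}(M)=\min\set{\lambda_{1,p}^{e},\lambda_{1,p}^{ce}}$ and it suffices to estimate each eigenvalue separately.

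The key pointwise input is the orthogonal decomposition
\begin{equation*}
\abs{\nabla\omega}^2=\frac{1}{p+1}\abs{d\omega}^2+\frac{1}{n-p+1}\abs{\delta\omega}^2+\abs{T_0\omega}^2,\qquad \omega\in\Omega^p(M),
\end{equation*}
which reflects the irreducible $\Lg{SO}(n)$-splitting $T^*M\otimes\Lambda^pT^*M\cong\Lambda^{p+1}T^*M\oplus\Lambda^{p-1}T^*M\oplus V$, with $T_0\omega$ denoting the projection onto $V$. This identity can also be verified directly from the facts that $d\omega=(p+1)\mathrm{Alt}(\nabla\omega)$ and that $\delta\omega$ is, up to sign, the trace of $\nabla\omega$. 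In particular, $\abs{\nabla\omega}^2\ge\frac{1}{n-p+1}\abs{\delta\omega}^2$ whenever $d\omega=0$, and $\abs{\nabla\omega}^2\ge\frac{1}{p+1}\abs{d\omega}^2$ whenever $\delta\omega=0$.

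Now let $\omega$ be an exact $p$-eigenform with $\Delta\omega=\lambda\omega$; then $d\omega=0$ and integration by parts gives $\int_M\abs{\delta\omega}^2=\lambda\int_M\abs{\omega}^2$. Integrating \eqref{Bformula} and inserting the Weitzenb\"ock hypothesis together with the refined Kato bound yields
\begin{equation*}
\lambda\int_M\abs{\omega}^2\ge\frac{\lambda}{n-p+1}\int_M\abs{\omega}^2+p(n-p)\Lambda\int_M\abs{\omega}^2,
\end{equation*}
so $\lambda\cdot\frac{n-p}{n-p+1}\ge p(n-p)\Lambda$ and hence $\lambda_{1,p}^{e}\ge p(n-p+1)\Lambda$. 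The same argument applied to a co-exact eigenform (where $\delta\omega=0$ and $\int_M\abs{d\omega}^2=\lambda\int_M\abs{\omega}^2$) gives $\lambda_{1,p}^{ce}\ge(p+1)(n-p)\Lambda$. Because the assumption $p\le n/2$ is equivalent to $p(n-p+1)\le(p+1)(n-p)$, the minimum is realized by the exact case and the stated bound follows. The main subtle point is verifying the refined Kato identity above; everything else is a routine application of \eqref{Bformula}.
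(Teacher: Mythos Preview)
Your proof is correct and rests on exactly the same identity as the paper's: the decomposition
\[
\abs{\nabla\omega}^2=\abs{P\omega}^2+\dfrac{1}{p+1}\abs{d\omega}^2+\dfrac{1}{n+1-p}\abs{\delta\omega}^2,
\]
which the paper phrases via the twistor operator $P$ and you phrase as the $\Lg{SO}(n)$-irreducible splitting of $T^*M\otimes\Lambda^pT^*M$ (your $T_0\omega$ is precisely $P\omega$). The only organizational difference is that the paper plugs this identity directly into the integrated Bochner formula for an arbitrary $\omega\in\Omega^p(M)$ and then invokes the variational characterization of $\lambda_{1,p}$, whereas you first use Bochner to kill $H^p(M,\R)$, invoke the Hodge splitting $\lambda_{1,p}=\min\set{\lambda_{1,p}^e,\lambda_{1,p}^{ce}}$, and then estimate the exact and co-exact eigenvalues separately. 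Your route is slightly longer but yields the extra information $\lambda_{1,p}^{ce}\ge (p+1)(n-p)\Lambda$; the paper's route avoids the detour through Hodge theory and works uniformly for all test forms. Substantively the two arguments are the same.
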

\begin{proof}
  Introduce the twistor operator $P$ on $M$ acting on $p$-form $\omega$ by
\begin{equation*}
P_X\omega\coloneqq\nabla_X\omega-\dfrac{1}{p+1}\iota_X\dif\omega+\dfrac{1}{n+1-p}X^{\flat}\wedge\delta\omega,
\end{equation*}
where $X^{\flat}$ is the dual $1$-form defined by $X^{\flat}(e_i)=\hin{X}{e_i}$. Then the following identity holds,
\begin{align*}
\abs{\nabla\omega}^2=\abs{P\omega}^2+\dfrac{1}{p+1}\abs{\dif\omega}^2+\dfrac{1}{n+1-p}\abs{\delta\omega}^2.
\end{align*}
Now applying Bochner formula (\ref{Bformula}), and by the assumption $W^{[p]}\ge p(n-p)\Lambda$, we obtain
\begin{align*}
&\dfrac{p}{p+1}\int_M\abs{\dif\omega}^2+\dfrac{n-p}{n+1-p}\int_M\abs{\delta\omega}^2\\
=&\int_M\abs{P\omega}^2+\hin{W^{[p]}(\omega)}{\omega}\\
\ge & p(n-p)\Lambda \int \abs{\omega}^2.
\end{align*}
By hypothesis $1\le p\le \frac{n}{2}$, we have $\frac{p}{p+1}\le \frac{n-p}{n+1-p}$. Therefore,

\begin{align*}
\dfrac{n-p}{n+1-p}\kh{\int_M\abs{\dif\omega}^2+\int_M\abs{\delta\omega}^2}
\ge  p(n-p)\Lambda \int \abs{\omega}^2.
\end{align*}
The conclusion follows from the variational characteristic  of the first eigenvalue.
\end{proof}

\begin{proof}[Proof of \autoref{eigenest}]
We will adopt the notations in Section 2, and for simplification, we introduce two more notations,
\begin{align*}
S(\omega)\coloneqq\eta^i\wedge\iota_{A^{\alpha}(e_i)}\omega\otimes\nu_{\alpha},
\quad
\mathring{S}(\omega)\coloneqq\eta^i\wedge\iota_{\mathring{A^{\alpha}}(e_i)}\omega\otimes\nu_{\alpha}.
\end{align*}
Direct calculations yield
\begin{align*}
W^{[p]}=&\eta^i\wedge\iota_{e_j}R(e_j,e_i)
=R_{ijkl}\eta^j\wedge\iota_{e_i}\left(\eta^k\wedge\iota_{e_l}\right),\\
i^*\bar W^{[p]}=&\eta^i\wedge\iota_{e_j}\bar R(e_j,e_i)
=\bar R_{ijkl}\eta^j\wedge\iota_{e_i}\left(\eta^k\wedge\iota_{e_l}\right).\\
\end{align*}
Hence, by using Gauss equation \eqref{eq:gauss},
\begin{align*}
&\hin{W^{[p]}(\omega)}{\omega}-\hin{i^*\bar W^{[p]}(\omega)}{\omega}\\
=&\left(R_{ijkl}-\bar R_{ijkl}\right)\hin{\eta^i\wedge\iota_{e_j}\omega}{\eta^k\wedge\iota_{e_l}\omega}\\
=&\sum_{\alpha}\left(h^{\alpha}_{ik}h^{\alpha}_{jl}-h^{\alpha}_{il}h^{\alpha}_{jk}\right)\hin{\eta^i\wedge\iota_{e_j}\omega}{\eta^k\wedge\iota_{e_l}\omega}\\
=&\sum_{\alpha}\hin{\eta^i\wedge\iota_{A^{\alpha}(e_l)}\omega}{A^{\alpha}(e_i)\wedge\iota_{e_l}\omega}-\sum_{\alpha}\hin{\eta^i\wedge\iota_{A^{\alpha}(e_k)}\omega}{\eta^k\wedge\iota_{A^{\alpha}(e_i)}\omega}\\
=&\sum_{\alpha}\hin{\iota_{A^{\alpha}(e_l)}\omega}{nH^{\alpha}\iota_{e_l}\omega+A^{\alpha}(e_i)\wedge\iota_{e_l}\iota_{e_i}\omega}-\sum_{\alpha}\hin{\iota_{A^{\alpha}(e_k)}\omega}{\iota_{A^{\alpha}(e_k)}\omega-\eta^k\wedge\iota_{e_i}\iota_{A^{\alpha}(e_i)}\omega}\\
=&\sum_{\alpha}\hin{\eta^l\wedge\iota_{nH^{\alpha}A^{\alpha}(e_l)}\omega}{\omega}-\sum_{\alpha}\abs{\sum_l\eta^l\wedge\iota_{A^{\alpha}(e_l)}\omega}^2+\sum_{\alpha}\abs{\sum_{k}\iota_{e_k}\iota_{A^{\alpha}(e_k)}\omega}^2\\
=&\sum_{\alpha}\hin{\eta^l\wedge\iota_{A^{\alpha}(e_l)}\omega}{nH^{\alpha}\omega}-\sum_{\alpha}\abs{\sum_l\eta^l\wedge\iota_{A^{\alpha}(e_l)}\omega}^2\\
=&-\sum_{\alpha}\abs{\sum_l\eta^l\wedge\iota_{A^{\alpha}(e_l)}\omega-\dfrac{n}{2}H^{\alpha}\omega}^2+\dfrac{n^2}{4}\abs{H}^2\abs{\omega}^2\\
=&-\abs{S(\omega)-\dfrac{n}{2}H\omega}^2+\dfrac{n^2}{4}\abs{H}^2\abs{\omega}^2.
\end{align*}
Therefore, by the assumption of the theorem, we have
\begin{align*}
  \hin{W^{[p]}(\omega)}{\omega}=&\hin{i^*\bar W^{[p]}(\omega)}{\omega}-\abs{S(\omega)-\dfrac{n}{2}H\omega}^2+\dfrac{n^2}{4}\abs{H}^2\abs{\omega}^2\\
=&\hin{i^*\bar W^{[p]}(\omega)}{\omega}-\abs{\mathring{S}(\omega)-\dfrac{n-2p}{2}H\omega}^2+\dfrac{n^2}{4}\abs{H}^2\abs{\omega}^2\\
=&\hin{i^*\bar W^{[p]}(\omega)}{\omega}-\abs{\mathring{S}(\omega)}^2+(n-2p)\hin{\mathring{S}(\omega)}{H\omega}+p(n-p)\abs{H}^2\abs{\omega}^2\\
\ge & p(n-p)c\abs{\omega}^2-\abs{\mathring{S}(\omega)}^2+(n-2p)\hin{\mathring{S}(\omega)}{H\omega}+p(n-p)\abs{H}^2\abs{\omega}^2.
\end{align*}
Hence, according to \autoref{lem:Wp} and the above inequality, to prove the theorem, it is sufficient to prove that,
\begin{align*}
\abs{\mathring{S}\vert_{\Omega^p(M)}}^2_{op}\leq\dfrac{p(n-p)}{n}\abs{\mathring{B}}^2,
\end{align*}
where $\abs{\cdot}_{op}$ stands for the operator norm  when acting on $p$-forms.

By definition, acting on $p$-forms,
\begin{align*}
\mathring{S}(\omega)=\eta^i\wedge\iota_{\mathring{A}^{\alpha}(e_i)}\omega\otimes\nu_{\alpha}\eqqcolon\mathring{S}^{\alpha}(\omega)\otimes\nu_{\alpha},
\end{align*}
we have
\begin{align*}
\abs{\mathring{S}}_{op}^2=\left(\sup_{0\neq\omega\in\Omega^p(M)}\dfrac{\abs{\mathring{S}(\omega)}}{\abs{\omega}}\right)^2\leq\sum_{\alpha=1}^m\abs{\mathring{S}^{\alpha}}_{op}^2.
\end{align*}
On the other hand, $\abs{\mathring{B}}^2=\sum_{\alpha=1}^m\abs{\mathring{A}^{\alpha}}^2$. Hence, the remain proof can be reduced to codimension  $m=1$ case, which has already done (c. f. \cite{RauSav11}).
\end{proof}
Conclusion (iii) of \autoref{cor1} is a direct consequence of \autoref{eigenest} when $p=\frac{n}{2}$. Conclusions (i) and (ii) of \autoref{cor1} follow from the corollary below.
\begin{cor}
Assume the assumptions of \autoref{eigenest} hold, then for every $\varepsilon >-1$, we have
$$
\lambda_{1,p}(M)\ge p(n+1-p)\min_{M} \kh{c-\frac{1}{n(1+\varepsilon)}\kh{\frac{n^2}{4p(n-p)}+\varepsilon}\abs{\mathring{B}}^2 -\varepsilon \abs{H}^2}.
$$
\end{cor}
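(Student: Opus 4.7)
The plan is to derive this estimate directly from \autoref{eigenest} by absorbing the cross term $|H|\,|\mathring{B}|$ that appears in the definition of $\gamma_p$ into a combination of $|\mathring{B}|^2$ and $|H|^2$ via a weighted Young inequality. The free parameter $\varepsilon$ will record the weight chosen in that inequality.

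More concretely, \autoref{eigenest} gives $\lambda_{1,p}(M)\ge p(n-p+1)(c+\gamma_p)$ with
\begin{align*}
\gamma_p=\min_{x\in M}\left(-\frac{1}{n}|\mathring{B}|^2-\frac{(n-2p)|H|}{\sqrt{np(n-p)}}|\mathring{B}|+|H|^2\right)(x).
\end{align*}
So it suffices to verify the pointwise inequality
\begin{align*}
-\frac{|\mathring{B}|^2}{n}-\frac{(n-2p)|H|\,|\mathring{B}|}{\sqrt{np(n-p)}}+|H|^2\;\ge\;-\frac{1}{n(1+\varepsilon)}\left(\frac{n^2}{4p(n-p)}+\varepsilon\right)|\mathring{B}|^2-\varepsilon|H|^2
\end{align*}
for every $\varepsilon>-1$. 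Rearranging, the claim is that
\begin{align*}
(1+\varepsilon)|H|^2-\frac{n-2p}{\sqrt{np(n-p)}}|H|\,|\mathring{B}|+\frac{1}{n(1+\varepsilon)}\left(\frac{n^2}{4p(n-p)}+\varepsilon-(1+\varepsilon)\right)|\mathring{B}|^2\;\ge\;0.
\end{align*}

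The main step is to simplify the coefficient of $|\mathring{B}|^2$ to $\dfrac{(n-2p)^2}{4np(n-p)(1+\varepsilon)}$ and then recognise the whole expression as the perfect square
\begin{align*}
\left(\sqrt{1+\varepsilon}\,|H|-\frac{n-2p}{2\sqrt{np(n-p)(1+\varepsilon)}}\,|\mathring{B}|\right)^{2}\ge 0,
\end{align*}
which holds for all $\varepsilon>-1$ (so the square root is real). This completes the reduction. There is no serious obstacle: the only small check is the algebraic identity for the $|\mathring{B}|^2$-coefficient, which is $\frac{1}{n(1+\varepsilon)}\cdot\frac{n^2-4p(n-p)}{4p(n-p)}=\frac{(n-2p)^2}{4np(n-p)(1+\varepsilon)}$. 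As sanity checks for the statement itself, setting $\varepsilon=0$ recovers conclusion (i) of \autoref{cor1}, while $\varepsilon=\frac{n}{2\sqrt{p(n-p)}}$ together with $|B|^2=|\mathring{B}|^2+n|H|^2$ recovers conclusion (ii).
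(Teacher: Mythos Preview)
Your proof is correct and follows exactly the route indicated in the paper, which simply says ``Direct calculations by \autoref{eigenest} and Young inequality.'' Your completion of the square is precisely the weighted Young inequality the paper has in mind, and the algebraic identity $\frac{n^2}{4p(n-p)}-1=\frac{(n-2p)^2}{4p(n-p)}$ is the only nontrivial check; your sanity checks for $\varepsilon=0$ and $\varepsilon=\frac{n}{2\sqrt{p(n-p)}}$ are also correct.
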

\begin{proof}
  Direct calculations by \autoref{eigenest} and Young inequality.
\end{proof}

%

\begin{proof}[Proof of \autoref{rigidthm}]
Notice that
\begin{align*}
c\geq\max\set{\dfrac{1}{n}\abs{\mathring{B}}^2+\dfrac{\abs{n-2p}\abs{H}}{\sqrt{np(n-p)}}\abs{\mathring{B}}-\abs{H}^2}
\end{align*}
is equivalent to
\begin{align*}
\abs{B}^2\leq nc+\dfrac{n^3\abs{H}^2}{2p(n-p)}-\dfrac{n\abs{n-2p}\abs{H}\sqrt{n^2\abs{H}^2+4cp(n-p)}}{2p(n-p)}.
\end{align*}
Hence, by assumption,  $\lambda_{1,p}(M)\geq0$. Thus, every harmonic $p$-form is a conformal killing form, i.e., $P\omega=0$, and is parallel. Moreover, if for some point, the strictly inequality holds, then there is no nontrivial harmonic $p$-form. In other words, $H^p(M,\R)=0$.

Notice that
\begin{align*}
&\min_{p\in\set{1,\dotsc,n-1}}\set{nc+\dfrac{n^3\abs{H}^2}{2p(n-p)}-\dfrac{n\abs{n-2p}\abs{H}\sqrt{n^2\abs{H}^2+4cp(n-p)}}{2p(n-p)}}\\
=&nc+\dfrac{n^3\abs{H}^2}{2(n-1)}-\dfrac{n(n-2)\abs{H}\sqrt{n^2\abs{H}^2+4c(n-1)}}{2(n-1)}.
\end{align*}
Consequently, if
\begin{align*}
\abs{B}^2\leq nc+\dfrac{n^3\abs{H}^2}{2(n-1)}-\dfrac{n(n-2)\abs{H}\sqrt{n^2\abs{H}^2+4c(n-1)}}{2(n-1)},
\end{align*}
then
\begin{align*}
b_p\coloneqq\dim_{\R}H^p(M,\R)\leq\binom{n}{p},\quad p\in\set{0,1,\dotsc, n}.
\end{align*}
Moreover, if the inequality holds strictly at some point, then
\begin{align*}
b_p\coloneqq\dim_{\R}H^p(M,\R)=0,\quad p\in\set{1,\dotsc, n-1}.
\end{align*}

Finally, if $\chi(M)\neq 1+(-1)^n$, there must be some $p\in\set{1,\dotsc, n-1}$ such that the betti number $b_p>0$. We finish the proof.
\end{proof}

\begin{proof}[Proof of \autoref{rationalsphere1}]Since $\abs{B}^2<\alpha(c,1,n,H)$, applying the estimate of the lower bound of the first $p$-eigenvalue, we know that the $p$-th betti number is zero for $0<p<n$, i.e., $M$ is a homology sphere.
\end{proof}

Besides the corollaries and theorems mentioned in the introduction, we have two more applications of  \autoref{eigenest}.
\begin{theorem}If the assumptions of \autoref{rigidthm} hold, and moreover if $\abs{\mathring{B}}^2\leq4cp(n-p)/n$ and the strictly inequality holds at some point, then $b_p=0$ for $p=1,\dotsc, n-1$. Therefore, if $\abs{\mathring{B}}^2<4c(n-1)/n$ and $M$ is simply connected, then $M$ is a rational homotopy sphere.
\end{theorem}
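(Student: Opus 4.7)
The plan is a Bochner-type vanishing argument in the spirit of the proof of \autoref{rigidthm}. The key input is the pointwise lower bound
\begin{equation*}
\hin{W^{[p]}\omega}{\omega}(x) \ge p(n-p)\kh{c - \frac{n}{4p(n-p)}\abs{\mathring{B}(x)}^2}\abs{\omega(x)}^2,
\end{equation*}
which is already implicit in the proof of \autoref{eigenest}: the estimate derived there for $\hin{W^{[p]}\omega}{\omega}$, combined with the elementary inequality $-\frac{1}{n}\abs{\mathring{B}}^2-\frac{(n-2p)\abs{H}}{\sqrt{np(n-p)}}\abs{\mathring{B}}+\abs{H}^2 \ge -\frac{n}{4p(n-p)}\abs{\mathring{B}}^2$ (a rearranged complete square, also used to derive \autoref{cor1}(i)), produces exactly the displayed form. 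Under the hypothesis $\abs{\mathring{B}}^2 \le 4cp(n-p)/n$, the bracketed coefficient is nonnegative on all of $M$ and strictly positive at the distinguished point.

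Next, for any harmonic $p$-form $\omega$, pairing the Weitzenb\"{o}ck formula $\Delta = \nabla^\star\nabla + W^{[p]}$ with $\omega$ and integrating by parts gives
\begin{equation*}
0 = \int_M \abs{\nabla\omega}^2 + \int_M \hin{W^{[p]}\omega}{\omega}.
\end{equation*}
Both integrands on the right are nonnegative, so both integrals must vanish. The vanishing of the first forces $\nabla\omega \equiv 0$, hence $\abs{\omega}$ is constant on the connected manifold $M$. If $\omega$ were nontrivial, $\abs{\omega}$ would be a nonzero constant, and at the distinguished point (and, by continuity, on a neighborhood of it) the second integrand would be strictly positive, contradicting the vanishing of the second integral. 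Therefore $\omega \equiv 0$ and $b_p = 0$.

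For the final statement, the plan is to observe that $p(n-p) \ge n-1$ for every $p \in \set{1,\dotsc, n-1}$, so the pointwise strict bound $\abs{\mathring{B}}^2 < 4c(n-1)/n$ automatically gives $\abs{\mathring{B}}^2 < 4cp(n-p)/n$ everywhere for each such $p$. The first part then yields $b_p = 0$ for all $p \in \set{1,\dotsc, n-1}$, and combined with simple connectivity the rational Hurewicz theorem delivers $\pi_i(M)\otimes\Q = 0$ for $1 \le i \le n-1$, so $M$ is a rational homotopy sphere.

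There is no substantive obstacle here: the argument is a routine Bochner vanishing. The one subtlety worth flagging is the passage from ``strict inequality at a single point'' to ``the harmonic form vanishes globally'', and this is supplied by the parallelism of $\omega$ that is itself a byproduct of the same identity.
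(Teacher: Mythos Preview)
Your argument is correct and, at the level of substance, coincides with the paper's: both are Bochner vanishing arguments showing $W^{[p]}\geq0$ pointwise with strict positivity somewhere. The packaging differs slightly. The paper proves the implication $\abs{\mathring{B}}^2\leq 4cp(n-p)/n \Rightarrow \abs{B}^2\leq\alpha(c,p,n,H)$ by computing
\[
\min_{H}\bigl(\alpha(c,p,n,H)-n\abs{H}^2\bigr)=\dfrac{4cp(n-p)}{n},
\]
and then simply invokes \autoref{rigidthm}. You instead bypass \autoref{rigidthm} and go straight to the pointwise Weitzenb\"{o}ck lower bound via the complete-square inequality behind \autoref{cor1}(i), then run Bochner vanishing by hand. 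The two reductions are equivalent: your square $\bigl(\tfrac{\abs{n-2p}}{2\sqrt{np(n-p)}}\abs{\mathring{B}}-\abs{H}\bigr)^2\geq0$ is exactly what makes the paper's minimum equal $4cp(n-p)/n$. Your route is marginally more self-contained; the paper's is more modular.
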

\begin{proof}
A direct computation gives
\begin{align*}
\min_{H}\set{\alpha(c,p,n,H)-n\abs{H}^2}=\dfrac{4cp(n-p)}{n},
\end{align*}
and the equality holds if and only if
\begin{align*}
n^2\abs{H}^2+4cp(n-p)=n^2c.
\end{align*}
Therefore, if $\abs{\mathring{B}}^2\leq\dfrac{4cp(n-p)}{n}$ and strictly inequality holds at some point, then $b_p=0$.
\end{proof}
Similarly,
\begin{theorem}If the assumptions of \autoref{rigidthm} hold, and moreover if
$\abs{B}^2\leq 2c\sqrt{p(n-p)}$ and the strictly inequality holds at some point, then $b_p=0$ for $p=1,\dotsc, n-1$. Therefore, if $\abs{B}^2<2c\sqrt{n-1}$ and $M$ is simply connected, then $M$ is a rational homotopy sphere.
\end{theorem}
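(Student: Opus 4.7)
The plan is to upgrade \autoref{cor1}(ii) to a pointwise Weitzenböck estimate and then run the Bochner argument on harmonic $p$-forms, mirroring the proof of the twin theorem above. Specifically, the Young-inequality derivation of \autoref{cor1}(ii) from the pointwise identity established in the proof of \autoref{eigenest} yields
\begin{align*}
\hin{W^{[p]}(\omega)}{\omega} \ge p(n-p)\kh{c - \frac{\abs{B}^2}{2\sqrt{p(n-p)}}}\abs{\omega}^2
\end{align*}
for every $p$-form $\omega$. The exponent has to be chosen so that the sum $-\tfrac{p(n-p)}{n}\abs{\mathring B}^2 - \tfrac{\abs{n-2p}}{\sqrt{np(n-p)}}\abs{\mathring B}\abs{H} + p(n-p)\abs{H}^2$ (already present in the proof of \autoref{eigenest}) dominates $-\tfrac{\sqrt{p(n-p)}}{2}\kh{\abs{\mathring B}^2 + n\abs{H}^2}$; the AM-GM verification, which reduces to the identity $n^{2}-4p(n-p)=(n-2p)^{2}$, is the only technical step and is the computation I expect to be the main hurdle.

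Fix $p\in\{1,\dotsc,n-1\}$ with $\abs{B}^2 \le 2c\sqrt{p(n-p)}$ on $M$ and strict inequality at some $x_0$. For any harmonic $p$-form $\omega$ (so $d\omega=\delta\omega=0$), the twistor identity from the proof of \autoref{lem:Wp} combined with the Bochner formula \eqref{Bformula} integrates to
\begin{align*}
0 = \int_M \abs{P\omega}^2 + \int_M \hin{W^{[p]}(\omega)}{\omega}.
\end{align*}
Both integrands are nonnegative under our hypothesis, so both vanish pointwise. The vanishing of $\abs{P\omega}^2$ together with $d\omega=\delta\omega=0$ forces $\nabla\omega=0$, so $\abs{\omega}$ is constant on $M$. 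The pointwise vanishing of $\hin{W^{[p]}(\omega)}{\omega}$ at $x_0$, together with the strict positivity of the factor $c-\frac{\abs{B}^2}{2\sqrt{p(n-p)}}$ there, then forces $\abs{\omega}(x_0)=0$; by constancy, $\omega\equiv 0$. Hence $b_p=0$.

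For the second statement, observe that $\sqrt{p(n-p)} \ge \sqrt{n-1}$ for every $p\in\{1,\dotsc,n-1\}$, with equality at $p=1$ and $p=n-1$. Consequently, the global strict hypothesis $\abs{B}^2 < 2c\sqrt{n-1}$ implies the pointwise strict inequality $\abs{B}^2 < 2c\sqrt{p(n-p)}$ for every such $p$ at every point of $M$. Applying the first part to each $p$ gives $b_p=0$ for all $1\le p \le n-1$, and simple connectedness together with the rational Hurewicz theorem (as in the remark following \autoref{rationalsphere1}) then identifies $M$ with a rational homotopy sphere.
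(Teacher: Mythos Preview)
Your proof is correct and is essentially the same as the paper's, just unfolded. The paper's one-line argument observes that $\min_{H}\alpha(c,p,n,H)=2c\sqrt{p(n-p)}$, so the hypothesis $\abs{B}^2\le 2c\sqrt{p(n-p)}$ forces $\abs{B}^2\le\alpha(c,p,n,H)$ and \autoref{rigidthm} applies; your route establishes the equivalent pointwise Weitzenb\"ock bound $\hin{W^{[p]}\omega}{\omega}\ge p(n-p)\bigl(c-\tfrac{\abs{B}^2}{2\sqrt{p(n-p)}}\bigr)\abs{\omega}^2$ (which is exactly the algebraic content of that minimization, via the identity $(n-2p)^2=n^2-4p(n-p)$) and then reruns the Bochner/twistor argument from the proof of \autoref{rigidthm} directly.
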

\begin{proof}
Since
\begin{align*}
\min_{H}\alpha(c,p,n,H)=2c\sqrt{p(n-p)},
\end{align*}
we obtain the theorem.
\end{proof}

\section{Eigenvalue estimate and Ejiri's type Theorem}
In this section, we will prove \autoref{thm:eriji}.

First, for readers' convenience,  we provide here a different but simple proof of \autoref{gclaim}.
\begin{proof}[Proof of \autoref{gclaim}]
We need to verify Lawson-Simons condition \eqref{eq:L-S}.

We first study the case of $p=1$.
\begin{align*}
\sum_{i=1}^p\sum_{j=p+1}^n\sum_{\alpha=1}^m\left(2\left(h_{ij}^{\alpha}\right)^2-h_{ii}^{\alpha}h_{jj}^{\alpha}\right)=&\sum_{j=2}^n\sum_{\alpha=1}^m\left(h^{\alpha}_{1j}\right)^2-Ric_{11}+(n-1)c\\
\leq&\dfrac12\left(\abs{B}^2-n\abs{H}^2\right)-Ric_{11}+(n-1)c\\
=&\dfrac12\left(n(n-1)\left(c+\abs{H}^2\right)-\sum_{i=1}^nR_{ii}\right)-Ric_{11}+(n-1)c\\
=&\dfrac12n(n-1)\left(c+\abs{H}^2\right)-\dfrac12\sum_{i=1}^nR_{ii}-Ric_{11}+(n-1)c.
\end{align*}
Hence, if
\begin{align*}
Ric>\dfrac{n(n-1)}{n+2}\left(c+\abs{H}^2\right),
\end{align*}
then
\begin{align*}
\sum_{j=2}^n\sum_{\alpha=1}^m\left(2\left(h_{1j}^{\alpha}\right)^2-h_{11}^{\alpha}h_{jj}^{\alpha}\right)<(n-1)c,
\end{align*}
which means that there is no stable integral $1$-currents.

Now we consider the case of $2\leq p\leq n/2$.
\begin{align*}
&\sum_{i=1}^p\sum_{j=p+1}^n\sum_{\alpha=1}^m\left(2\left(h_{ij}^{\alpha}\right)^2-h_{ii}^{\alpha}h_{jj}^{\alpha}\right)\\
=&\sum_{i=1}^p\sum_{j=p+1}^n\sum_{\alpha=1}^m2\left(\mathring{h}_{ij}^{\alpha}\right)^2+\sum_{\alpha=1}^m\left(\left(\sum_{i=1}^p\mathring{h}^{\alpha}_{ii}\right)^2-(n-2p)H^{\alpha}\sum_{i=1}^p\mathring{h}^{\alpha}_{ii}\right)-p(n-p)\abs{H}^2\\
=&\sum_{i=1}^p\sum_{j=p+1}^n\sum_{\alpha=1}^m2\left(\mathring{h}_{ij}^{\alpha}\right)^2+\sum_{\alpha=1}^m\left(\left(\sum_{i=1}^p\mathring{h}^{\alpha}_{ii}-\dfrac{n-2}{2}pH^{\alpha}\right)^2+(p-1)nH^{\alpha}\sum_{i=1}^p\mathring{h}^{\alpha}_{ii}\right)\\
&-\dfrac{(n-2)^2}{4}p^2\abs{H}^2-p(n-p)\abs{H}^2.
\end{align*}
Similarly,
\begin{align*}
&\sum_{i=1}^p\sum_{j=p+1}^n\sum_{\alpha=1}^m\left(2\left(h_{ij}^{\alpha}\right)^2-h_{ii}^{\alpha}h_{jj}^{\alpha}\right)\\
=&\sum_{i=1}^p\sum_{j=p+1}^n\sum_{\alpha=1}^m2\left(\mathring{h}_{ij}^{\alpha}\right)^2+\sum_{\alpha=1}^m\left(\left(\sum_{j=p+1}^n\mathring{h}^{\alpha}_{jj}-\dfrac{n-2}{2}(n-p)H^{\alpha}\right)^2+(n-p-1)nH^{\alpha}\sum_{j=p+1}^n\mathring{h}^{\alpha}_{jj}\right)\\
&-\dfrac{(n-2)^2}{4}(n-p)^2\abs{H}^2-p(n-p)\abs{H}^2.
\end{align*}
Therefore,
\begin{align*}
&\sum_{i=1}^p\sum_{j=p+1}^n\sum_{\alpha=1}^m\left(2\left(h_{ij}^{\alpha}\right)^2-h_{ii}^{\alpha}h_{jj}^{\alpha}\right)\\
=&\dfrac{n-p-1}{n-2}\left(\sum_{i=1}^p\sum_{j=p+1}^n\sum_{\alpha=1}^m2\left(\mathring{h}_{ij}^{\alpha}\right)^2+\sum_{\alpha=1}^m\left(\sum_{i=1}^p\mathring{h}^{\alpha}_{ii}-\dfrac{n-2}{2}pH^{\alpha}\right)^2-\dfrac{(n-2)^2}{4}p^2\abs{H}^2\right)\\
&+\dfrac{p-1}{n-2}\left(\sum_{i=1}^p\sum_{j=p+1}^n\sum_{\alpha=1}^m2\left(\mathring{h}_{ij}^{\alpha}\right)^2+\sum_{\alpha=1}^m\left(\sum_{j=p+1}^n\mathring{h}^{\alpha}_{jj}-\dfrac{n-2}{2}(n-p)H^{\alpha}\right)^2-\dfrac{(n-2)^2}{4}(n-p)^2\abs{H}^2\right)\\
&-p(n-p)\abs{H}^2.
\end{align*}
 Thus, for $2\leq p\leq n/2$,
\begin{align*}
&\sum_{i=1}^p\sum_{j=p+1}^n\sum_{\alpha=1}^m\left(2\left(h_{ij}^{\alpha}\right)^2-h_{ii}^{\alpha}h_{jj}^{\alpha}\right)\\
\leq&\dfrac{n-p-1}{n-2}\left(\sum_{i=1}^p\sum_{j\neq i}\sum_{\alpha=1}^mp\left(\mathring{h}_{ij}^{\alpha}\right)^2+\sum_{\alpha=1}^mp\sum_{i=1}^p\left(\mathring{h}^{\alpha}_{ii}-\dfrac{n-2}{2}H^{\alpha}\right)^2-\dfrac{(n-2)^2}{4}p^2\abs{H}^2\right)\\
&+\dfrac{p-1}{n-2}\left(\sum_{j=p+1}^n\sum_{i\neq j}\sum_{\alpha=1}^m(n-p)\left(\mathring{h}_{ij}^{\alpha}\right)^2+\sum_{\alpha=1}^m(n-p)\sum_{j=p+1}^n\left(\mathring{h}^{\alpha}_{jj}-\dfrac{n-2}{2}H^{\alpha}\right)^2-\dfrac{(n-2)^2}{4}(n-p)^2\abs{H}^2\right)\\
&-p(n-p)\abs{H}^2\\
=&\dfrac{(n-p-1)p}{n-2}\sum_{i=1}^p\sum_{\alpha=1}^m\left(\sum_{j\neq i}\left(\mathring{h}_{ij}^{\alpha}\right)^2+\left(\mathring{h}^{\alpha}_{ii}-\dfrac{n-2}{2}H^{\alpha}\right)^2-\dfrac{n^2}{4}\abs{H^{\alpha}}^2+(n-1)\abs{H^{\alpha}}^2\right)\\
&+\dfrac{(p-1)(n-p)}{n-2}\sum_{j=p+1}^n\sum_{\alpha=1}^m\left(\sum_{i\neq j}\left(\mathring{h}_{ij}^{\alpha}\right)^2+\left(\mathring{h}^{\alpha}_{jj}-\dfrac{n-2}{2}H^{\alpha}\right)^2-\dfrac{n^2}{4}\abs{H^{\alpha}}^2+(n-1)\abs{H^{\alpha}}^2\right)\\
&-p(n-p)\abs{H}^2\\
\leq&\dfrac{(n-p-1)p}{n-2}(-K_p+(n-1)p\abs{H}^2)+\dfrac{(p-1)(n-p)}{n-2}\dfrac{n-p}{p}(-K_p+(n-1)p\abs{H}^2)-p(n-p)\abs{H}^2\\
=&-\dfrac{(n+2)p(n-p)-n^2}{(n-2)p}(K_p-(n-1)p\abs{H}^2)-p(n-p)\abs{H}^2,
\end{align*}
where
\begin{align*}
K_p=\min_{\set{e_i}}\set{\sum_{i=1}^pRic_{ii}-(n-1)pc}\eqqcolon Ric_{(p)}-(n-1)pc.
\end{align*}
Hence,
\begin{align*}
\sum_{i=1}^p\sum_{j=p+1}^n\sum_{\alpha=1}^m\left(2\left(h_{ij}^{\alpha}\right)^2-h_{ii}^{\alpha}h_{jj}^{\alpha}\right)\leq\dfrac{(n+2)p(n-p)-n^2}{(n-2)}\left((n-1)(c+\abs{H}^2)-\dfrac{Ric_{(p)}}{p}\right)-p(n-p)\abs{H}^2.
\end{align*}
Consequently, if
\begin{align*}
\dfrac{(n+2)p(n-p)-n^2}{(n-2)}\left((n-1)(c+\abs{H}^2)-\dfrac{Ric_{(p)}}{p}\right)-p(n-p)\abs{H}^2<p(n-p)c,
\end{align*}
or equivalently,
\begin{align*}
\dfrac{Ric_{(p)}}{p}>\left(n-1-\dfrac{(n-2)p(n-p)}{(n+2)p(n-p)-n^2}\right)\left(c+\abs{H}^2\right),
\end{align*}
we have
\begin{align*}
\sum_{i=1}^p\sum_{j=p+1}^n\sum_{\alpha=1}^m\left(2\left(h_{ij}^{\alpha}\right)^2-h_{ii}^{\alpha}h_{jj}^{\alpha}\right)<p(n-p)c.
\end{align*}

Finally, since $n\geq4$ and $c+\abs{H}^2\geq0$, we have
\begin{align*}
\dfrac{Ric_{(p)}}{p}\geq Ric_{\min},\\
(n-2)\left(c+\abs{H}^2\right)\geq\max_{1<p<n-1}\left(n-1-\dfrac{(n-2)p(n-p)}{(n+2)p(n-p)-n^2}\right)\left(c+\abs{H}^2\right),\\
(n-2)\left(c+\abs{H}^2\right)\geq\dfrac{n(n-1)}{n+2}\left(c+\abs{H}^2\right).
\end{align*}
Thus, if $Ric>(n-2)\left(c+\abs{H}^2\right)$ and $n\geq4$, then there is no stable integral $p$-currents for $0<p<n$.
\end{proof}

The idea of proving \autoref{thm:eigenricci} is
similar as the proof of \autoref{eigenest}. But since the Ricci curvature is the sum of sectional curvatures,
we must confront a more complicated algebra than the proof of theorem 1.1.
Hence, before proving  \autoref{thm:eigenricci}, we need an algebraic lemma.

Given a matrix $A\in M_{n\times n}(\R)$, we extend $A$ linearly into an operator $A:\Lambda^*\R^n\To \Lambda^*\R^n$ satisfying
\begin{align*}
A(\omega\wedge\eta)=A(\omega)\wedge\eta+\omega\wedge A(\eta),\quad\forall\omega, \eta\in \Lambda^*\R^n.
\end{align*}
For the matrix $A$, as an operator of $\R^n$ to itself, we denote by $\abs{A}_2$ its operator norm, i.e.,
\begin{align*}
\abs{A}_2\coloneqq\max_{0\neq x\in\R^n}\dfrac{\abs{Ax}}{\abs{x}}.
\end{align*}
It is obvious that $\abs{A}_2^2$ is the largest eigenvalue of $A^*A$.

Now we state the following algebraic Lemma.
\begin{lem}\label{lem:algebraic-eriji}For every symmetric matrices $A^{\alpha}\in M_{n\times n}(\R), 1\leq\alpha\leq m$, we have
\begin{align*}
\sum_{\alpha=1}^m\abs{A^{\alpha}\omega}^2\leq p^2\abs{\sum_{\alpha=1}^m\left(A_{\alpha}\right)^2}_{2}\abs{\omega}^2,\quad\forall \omega\in\Lambda^p\R^n, \quad 0\leq p\leq n.
\end{align*}
\end{lem}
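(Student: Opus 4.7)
The plan is to exploit the fact that each symmetric $A^{\alpha}$, viewed as a derivation on $\Lambda^p\R^n$, is self-adjoint, so that
\[
\sum_{\alpha=1}^m \abs{A^{\alpha}\omega}^2 = \left\langle \omega,\, \sum_{\alpha=1}^m (A^{\alpha})^2\,\omega \right\rangle_{\Lambda^p},
\]
where on the right $(A^{\alpha})^2$ means the derivation $A^{\alpha}$ composed with itself on $\Lambda^p\R^n$. Self-adjointness is easy to see by diagonalising: in its orthonormal eigenbasis $\set{f_i^{\alpha}}$ the extension is $\sum_i \lambda_i^{\alpha}\, f_i^{\alpha} \wedge \iota_{f_i^{\alpha}}$, a real combination of orthogonal projections. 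The statement thus reduces to an operator inequality on $\Lambda^p\R^n$, comparing the composition $(A^{\alpha})^2$ on forms with the derivation of the matrix square — these are genuinely different operators, since a composition of derivations is not a derivation.

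The crux of the argument is to prove, for a single symmetric $A$,
\[
\rho(A)^2 \leq p\,\rho(A^2) \qquad \text{on } \Lambda^p\R^n,
\]
where $\rho$ denotes the derivation extension. Diagonalising $A$ with eigenvalues $\lambda_1,\dotsc,\lambda_n$ in an orthonormal basis $\set{f_i}$, both $\rho(A)^2$ and $\rho(A^2)$ are diagonal in the induced basis $\set{f_I}_{\abs{I}=p}$ of $\Lambda^p\R^n$, with respective eigenvalues $\bigl(\sum_{i\in I}\lambda_i\bigr)^2$ and $\sum_{i\in I}\lambda_i^2$, and Cauchy--Schwarz supplies the factor $p$. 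Summing over $\alpha$ and invoking linearity of $\rho$ gives
\[
\sum_{\alpha=1}^m \abs{A^{\alpha}\omega}^2 \leq p \left\langle \omega,\, \rho(T)\,\omega\right\rangle, \qquad T := \sum_{\alpha=1}^m (A^{\alpha})^2,
\]
where now $T$ is the \emph{matrix} sum of squares, hence positive semidefinite symmetric on $\R^n$.

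To close, I would bound the operator norm of $\rho(T)$ acting on $\Lambda^p\R^n$: in an orthonormal eigenbasis of $T$ with eigenvalues $0\leq\mu_1\leq\dotsm\leq\mu_n$, $\rho(T)$ is diagonal on $\Lambda^p\R^n$ with eigenvalues $\sum_{i\in I}\mu_i\leq p\mu_n = p\abs{T}_2$, so $\rho(T)\leq p\abs{T}_2\cdot\Id$. Combining the two bounds yields $\sum_{\alpha} \abs{A^{\alpha}\omega}^2 \leq p^2 \abs{T}_2 \abs{\omega}^2$, as required. The main obstacle is conceptual rather than computational: one must distinguish the two ways a matrix $A$ acts on $\Lambda^p\R^n$, namely $\rho(A)^2$ versus $\rho(A^2)$, and locate the correct dimensional loss between them. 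The exponent $p^2$ in the statement is precisely the product of the two factors of $p$ lost, one in passing from $\rho(A)^2$ to $\rho(A^2)$, and one in passing from $\rho(T)$ on $\Lambda^p\R^n$ down to $T$ on $\R^n$.
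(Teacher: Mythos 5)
Your proof is correct and takes a genuinely different route from the paper. The paper's proof is coordinate-based: it writes $A^{\alpha}\omega$ explicitly as a $p$-fold antisymmetrizer, applies Cauchy--Schwarz componentwise to the $p$ terms (gaining one factor of $p$), and then invokes the $p=1$ case of the inequality on each ``slot'' of $\omega$, where the operator norm $\abs{\sum_{\alpha}(A^{\alpha})^2}_2$ appears directly. Your argument is basis-free and operator-theoretic: you isolate the two places where a factor of $p$ is lost as clean operator inequalities on $\Lambda^p\R^n$ --- namely $\rho(A)^2 \leq p\,\rho(A^2)$ (Cauchy--Schwarz on eigenvalues of the derivation extension) and $\rho(T) \leq p\abs{T}_2\Id$ for $T \geq 0$ --- and combine them via self-adjointness of $\rho(A^\alpha)$ and linearity of $\rho$. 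The two approaches are mathematically parallel (each factor of $p$ in your proof corresponds to one step in theirs), but yours makes the mechanism transparent by cleanly separating the role of ``composition of derivations'' $\rho(A)^2$ from ``derivation of the square'' $\rho(A^2)$, which is buried in the index manipulations of the paper's version. Your proof also shows without extra work that the constant $p^2$ cannot be improved to anything smaller than $p^2$ in general (take $m=1$, $A=\Id$, $\omega$ decomposable), which the paper does not discuss.
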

\begin{proof} If $p=1$, a direct verification claims that this conjecture is true, i.e., for every real numbers $x_1,\dotsc, x_n$, we have
\begin{align*}
\sum_{\alpha=1}^m\sum_{i=1}^n\abs{\sum_{j=1}^nA^{\alpha}_{ij}x_j}^2\leq\abs{\sum_{\alpha=1}^m(A^{\alpha})^2}_2\sum_{j=1}^n\abs{x_j}^2.
\end{align*}

Set
\begin{align*}
\omega=\dfrac{1}{p!}\omega_{i_1\dotsc i_p}e_{i_1}\wedge\dotsm\wedge e_{i_p}=\sum_{1\leq i_1<\dotsm<i_p\leq n}\omega_{i_1\dotsc i_p}e_{i_1}\wedge\dotsm\wedge e_{i_p},
\end{align*}
then
\begin{align*}
A^{\alpha}\omega=&\dfrac{1}{p!}\omega_{i_1\dotsc i_p}A^{\alpha}(e_{i_1})\wedge\dotsm\wedge e_{i_p}+\dotsm+\dfrac{1}{p!}\omega_{i_1\dotsc i_p}e_{i_1}\wedge\dotsm\wedge A^{\alpha}(e_{i_p})\\
=&\dfrac{1}{(p-1)!}\omega_{i_1i_2\dotsc i_p}A^{\alpha}_{i_1k}e_k\wedge e_{i_2}\dotsm\wedge e_{i_p}\\
=&p\sum_{1\leq k<i_2<\dotsm<i_p\leq n}\tilde\omega_{k i_2\dotsc i_p}e_k\wedge e_{i_2}\dotsm\wedge e_{i_p},
\end{align*}
where $\tilde\omega_{k i_2\dotsc i_p}$ is the antisymmetrizer of $\sum_{i_1=1}^n\omega_{i_1i_2\dotsc i_p}A^{\alpha}_{i_1k}$, i.e.,
\begin{align*}
\tilde\omega_{k i_2\dotsc i_p}=\dfrac1p\left(\sum_{i_1=1}^n\omega_{i_1i_2\dotsc i_p}A^{\alpha}_{i_1k}-\sum_{i_1=1}^n\omega_{i_1i_2\dotsc i_{p-1}k}A^{\alpha}_{i_1i_p}-\dotsm-\sum_{i_1=1}^n\omega_{i_1ki_3\dotsc i_p}A^{\alpha}_{i_1i_2}\right).
\end{align*}
Therefore, we obtain
\begin{align*}
\sum_{\alpha=1}^m\abs{A^{\alpha}\omega}^2\leq&\dfrac{ p^2}{p!}\sum_{\alpha=1}^m\sum_{i_2,\dotsc,i_p,k}\abs{\tilde\omega_{ki_2\dotsc i_p}}^2\\
\leq&\dfrac{ p}{p!}\sum_{\alpha=1}^m\sum_{\#\set{i_2,\dotsc,i_p,k}=p}\left(\abs{\sum_{i_1}\omega_{i_1i_2\dotsc i_p}A^{\alpha}_{i_1k}}^2+\abs{\sum_{i_1}\omega_{i_1i_2\dotsc i_{p-1}k}A^{\alpha}_{i_1i_p}}^2+\dotsm+\abs{\sum_{i_1}\omega_{i_1ki_3\dotsc i_p}A^{\alpha}_{i_1i_2}}^2\right)\\
=&\dfrac{ p^2}{p!}\sum_{i_2,\dotsc,i_p}\sum_{\alpha=1}^m\sum_{k\notin\set{i_2,\dotsc,i_p}}\abs{\sum_{i_1}\omega_{i_1i_2\dotsc i_p}A^{\alpha}_{i_1k}}^2\\
\leq&\dfrac{ p^2}{p!}\sum_{i_2,\dotsc,i_p}\abs{\sum_{\alpha=1}^m(A^{\alpha})^2}_2\sum_{i_1}\omega_{i_1i_2\dotsc i_p}^2\\
=&p^2\abs{\sum_{\alpha=1}^m(A^{\alpha})^2}_2\abs{\omega}^2.
\end{align*}
\end{proof}

\begin{proof}[Proof of  \autoref{thm:eigenricci}]
Let $\omega\in\Omega^p(M)$ with $\abs{\omega}=1$.
First,
\begin{align*}
&\abs{\mathring{S}(\omega)-\dfrac{n-2p}{2}H\omega}^2-\dfrac{n^2}{4}\abs{H}^2\\
=&\abs{\mathring{S}(\omega)}^2-(n-2p)\hin{\mathring{S}(\omega)}{H\omega}-p(n-p)\abs{H}^2\\
=&\abs{\mathring{S}(\omega)-\dfrac{n-2}{2}pH\omega}^2+(p-1)n\hin{\mathring{S}(\omega)}{H\omega}-\dfrac{(n-2)^2}{4}p^2\abs{H}^2-p(n-p)\abs{H}^2.
\end{align*}

Moreover, a direct calculation yields
\begin{align*}
&\abs{\mathring{S}(\omega)-\dfrac{n-2p}{2}H\omega}^2-\dfrac{n^2}{4}\abs{H}^2\\
=&\abs{\mathring{S}(*\omega)-\dfrac{n-2(n-p)}{2}H*\omega}^2-\dfrac{n^2}{4}\abs{H}^2\\
=&\abs{\mathring{S}(*\omega)-\dfrac{n-2}{2}(n-p)H*\omega}^2+(n-p-1)n\hin{\mathring{S}(*\omega)}{H*\omega}\\
&-\dfrac{(n-2)^2}{4}(n-p)^2\abs{H}^2-p(n-p)\abs{H}^2\\
=&\abs{\mathring{S}(\omega)+\dfrac{n-2}{2}(n-p)H\omega}^2-(n-p-1)n\hin{\mathring{S}(\omega)}{H\omega}\\
&-\dfrac{(n-2)^2}{4}(n-p)^2\abs{H}^2-p(n-p)\abs{H}^2.
\end{align*}
Therefore,
\begin{align*}
&\abs{\mathring{S}(\omega)-\dfrac{n-2p}{2}H\omega}^2-\dfrac{n^2}{4}\abs{H}^2\\\\
=&\dfrac{n-p-1}{n-2}\left(\abs{\mathring{S}(\omega)-\dfrac{n-2}{2}pH\omega}^2-\dfrac{(n-2)^2}{4}p^2\abs{H}^2\right)\\
&+\dfrac{p-1}{n-2}\left(\abs{\mathring{S}(\omega)+\dfrac{n-2}{2}(n-p)H\omega}^2-\dfrac{(n-2)^2}{4}(n-p)^2\abs{H}^2\right)\\
&-p(n-p)\abs{H}^2.
\end{align*}

Notice that acting on $p$-forms, $\mathring{S}-\dfrac{n-2}{2}pH$ can be viewed as a linearly extension operator of $\left(\mathring{A}^{\alpha}-\dfrac{n-2}{2}H^{\alpha}\Id\right)\otimes\nu_{\alpha}$. In particular, applying \autoref{lem:algebraic-eriji}, we obtain
\begin{align*}
\abs{\mathring{S}(\omega)-\dfrac{n-2}{2}pH\omega}^2\leq p^2\abs{\sum_{\alpha=1}^m\left(\mathring{A}^{\alpha}-\dfrac{n-2}{2}H^{\alpha}\Id\right)^2}_2.
\end{align*}
Similarly,
\begin{align*}
\abs{\mathring{S}(\omega)+\dfrac{n-2}{2}(n-p)H\omega}^2=\abs{\mathring{S}(*\omega)-\dfrac{n-2}{2}(n-p)H*\omega}^2\leq (n-p)^2\abs{\sum_{\alpha=1}^m\left(\mathring{A}^{\alpha}-\dfrac{n-2}{2}H^{\alpha}\Id\right)^2}_2.
\end{align*}
Consequently,
\begin{align*}
&\abs{\mathring{S}\vert_{\Lambda^p}-\dfrac{n-2p}{2}H}_{op}^2-\dfrac{n^2}{4}\abs{H}^2\\
\leq&\dfrac{(n-p-1)p^2+(p-1)(n-p)^2}{n-2}\left(\abs{\sum_{\alpha=1}^m\left(\mathring{A}^{\alpha}-\dfrac{n-2}{2}H^{\alpha}\Id\right)^2}_2-\dfrac{(n-2)^2}{4}\abs{H}^2\right)\\
=&\dfrac{(n+2)p(n-p)-n^2}{n-2}\left(\abs{\sum_{\alpha=1}^m\left(\mathring{A}^{\alpha}-\dfrac{n-2}{2}H^{\alpha}\Id\right)^2}_2-\dfrac{(n-2)^2}{4}\abs{H}^2\right).
\end{align*}

By Gauss equation, we know that
\begin{align*}
Ric_{ii}=&\sum_{j=1}^n\bar R_{ijij}-\abs{\mathring{B}_{ii}}^2+(n-2)\hin{\mathring{B}_{ii}}{H}-\sum_{j\neq i}\abs{\mathring{B}_{ij}}^2+(n-1)\abs{H}^2\\
=&\sum_{j=1}^n\bar R_{ijij}-\abs{\mathring{B}_{ii}-\dfrac{n-2}{2}H}^2-\sum_{j\neq i}\abs{\mathring{B}_{ij}}^2+\dfrac{n^2}{4}\abs{H}^2.
\end{align*}
By choosing $\set{e_i}$ so that
\begin{align*}
\sum_{\alpha=1}^m\left(\mathring{A}^{\alpha}-\dfrac{n-2}{2}H^{\alpha}\Id\right)^2
\end{align*}
is a diagonalizing matrix, without loss of generality, assume the largest eigenvalue is $\abs{\mathring{B}_{11}-\dfrac{n-2}{2}H}^2$ we obtain
\begin{align*}
Ric_{\min}\leq Ric_{11}=&\sum_{j=1}^n\bar R_{1j1j}-\abs{\mathring{B}_{11}-\dfrac{n-2}{2}H}^2+\dfrac{n^2}{4}\abs{H}^2\\
=&\sum_{j=1}^n\bar R_{1j1j}-\abs{\sum_{\alpha=1}^m\left(\mathring{A}^{\alpha}-\dfrac{n-2}{2}H^{\alpha}\Id\right)^2}_2+\dfrac{n^2}{4}\abs{H}^2\\
\leq&(n-1)c^*-\abs{\sum_{\alpha=1}^m\left(\mathring{A}^{\alpha}-\dfrac{n-2}{2}H^{\alpha}\Id\right)^2}_2+\dfrac{n^2}{4}\abs{H}^2.
\end{align*}

As a consequence, we obtain
\begin{align*}
\abs{\mathring{S}\vert_{\Lambda^p}-\dfrac{n-2p}{2}H}_{op}^2-\dfrac{n^2}{4}\abs{H}^2\leq&\dfrac{(n+2)p(n-p)-n^2}{n-2}\left((n-1)(c^*+\abs{H}^2)-Ric_{\min}\right)-p(n-p)\abs{H}^2.
\end{align*}
Hence, by assumption
\begin{align*}
W^{[p]}\geq& p(n-p)c_*-\dfrac{(n+2)p(n-p)-n^2}{n-2}\left((n-1)(c^*+\abs{H}^2)-Ric_{\min}\right)+p(n-p)\abs{H}^2\\
=&\dfrac{(n+2)p(n-p)-n^2}{n-2}\left(Ric_{\min}-(n-1)\left(c^*+\abs{H}^2\right)+\dfrac{(n-2)p(n-p)}{(n+2)p(n-p)-n^2}\left(c_*+\abs{H}^2\right)\right).
\end{align*}
Therefore, according to \autoref{lem:Wp}, we have our conclusion.
\end{proof}

\begin{proof}[Proof of of weak Ejiri type \autoref{thm:eriji}] By \autoref{thm:eigenricci} and a similar argument as the proof of \autoref{rationalsphere1}.
\end{proof}

\appendix
\section{An Example}
The following example shows that the conditions mentioned in this paper are sharp.
\begin{eg}Consider the following Clifford torus
\begin{align*}\Lg{S}^p\left(\dfrac{\mu}{\sqrt{1+\mu^2}}\right)\times\Lg{S}^{n-p}\left(\dfrac{1}{\sqrt{1+\mu^2}}\right)\subset\Lg{S}^{n+1}(1)\subset\R^{n+2},\quad p=1,2,\dotsc, n-1,\quad \mu>0.
\end{align*}
It is obvious that $b_p\geq\max\set{p,n-p}$ and $\lambda_{1,p}=\lambda_{1,n-p}=0$.

Let $\phi=(x,y)$ be the position vector, then the first fundamental form is given by
\begin{align*}
\dif s^2=\dif x\dif x+\dif y\dif y.
\end{align*}
A unit norm vector field is $\nu=(-\mu^{-1}x,\mu y)$. Hence, the second fundamental form $B$ is
\begin{align*}
B=&-\hin{\dif(x,y)}{\dif(-\mu^{-1}x,\mu y)}\\
=&\mu^{-1}\dif x\dif x-\mu\dif y\dif y.
\end{align*}
Consequently the principal curvatures are $\mu^{-1}$ and $-\mu$ with multiplicity $p$ and $n-p$ respectively. In particular,
\begin{align*}
H=\dfrac{1}{n}\left(p\mu^{-1}-(n-p)\mu\right),\\
\abs{B}^2=p\mu^{-2}+(n-p)\mu^2,\\
\abs{\mathring{B}}^2=\dfrac{p(n-p)}{n}\left(\mu^{-1}+\mu\right)^2.
\end{align*}
Moreover, the sectional curvature satisfies
\begin{equation*}
K_{ij}=
\begin{cases}
1+\mu^{-2},& 1\leq i< j\leq p;\\
1+\mu^{2},&p+1\leq i\neq j\leq n;\\
0,&1\leq i\leq p, p+1\leq j\leq n.
\end{cases}
\end{equation*}
The Ricci curvature satisfies
\begin{equation*}
Ric_{ii}=
\begin{cases}
(p-1)(1+\mu^{-2}),&1\leq i\leq p;\\
(n-p-1)(1+\mu^2),&p+1\leq i\leq n.
\end{cases}
\end{equation*}

 A direct computation gives the following: if $(n-2p)(p\mu^{-1}-(n-p)\mu)\leq0$, then
 \begin{align*}
&\dfrac{\abs{\mathring{B}}}{\sqrt{n}}+\dfrac{\abs{n-2p}\abs{H}}{2\sqrt{p(n-p)}}-\sqrt{1+\dfrac{n^2\abs{H}^2}{4p(n-p)}}\\
=&\dfrac{\sqrt{p(n-p)}(\mu^{-1}+\mu)}{n}+\dfrac{\abs{n-2p}\abs{p\mu^{-1}-(n-p)\mu}}{2n\sqrt{p(n-p)}}-\dfrac{(p\mu^{-1}+(n-p)\mu)}{2\sqrt{p(n-p)}}\\
=&\dfrac{2p(n-p)(\mu^{-1}+\mu)+\abs{n-2p}\abs{p\mu^{-1}-(n-p)\mu}-(np\mu^{-1}+n(n-p)\mu)}{2n\sqrt{p(n-p)}}\\
=&\dfrac{2p(n-p)(\mu^{-1}+\mu)-(n-2p)(p\mu^{-1}-(n-p)\mu)-(np\mu^{-1}+n(n-p)\mu)}{2n\sqrt{p(n-p)}}\\
=&0.
\end{align*}
Hence, if $(n-2p)(p\mu^{-1}-(n-p)\mu)\leq0$, we obtain that $\abs{B}^2=\alpha(1,p,n,H)$.

When $p=1$ or $p=n-1$, we have
$$
Ric_{ min}-\left(n-1 -\frac{(n-2)p(n-p)}{(n+2)p(n-p)-n^2}\right)(1+\vert H\vert^2)
=0.
$$
When $1< p<n-1$, taking $\mu = \sqrt{\frac{p-1}{n-p-1}}$, we have $Ric_{ii}\equiv n-2$ for all $1\le i \le n$ which implies $Ric= (n-2)g$.
Therefore,
\begin{align*}
  &Ric_{min} -\left(n-1 -\frac{(n-2)p(n-p)}{(n+2)p(n-p)-n^2}\right)(1+\vert H\vert^2)\\
  &=Ric_{min} -\frac{n^2(p-1)(n-p-1)}{(n+2)p(n-p)-n^2}\cdot\left(1+\left(\frac{p\mu^{-1} -(n-p)\mu}{n}\right)^2\right)\\
  &=n-2-\frac{n^2(p-1)(n-p-1)}{(n+2)p(n-p)-n^2}\cdot\frac{(n-2)((n+2)p(n-p)-n^2)}{n^2(p-1)(n-p-1)}\\
  &=n-2-(n-2)\\
  &=0.
\end{align*}
\end{eg}


\begin{thebibliography}{10}

\bibitem{Eji79}
N.~Ejiri, \emph{Compact minimal submanifolds of a sphere with positive {R}icci
  curvature}, J. Math. Soc. Japan \textbf{31} (1979), no.~2, 251--256.
  \MR{527542}

\bibitem{GalMey75}
S.~Gallot and D.~Meyer, \emph{Op\'erateur de courbure et laplacien des formes
  diff\'erentielles d'une vari\'et\'e riemannienne}, J. Math. Pures Appl. (9)
  \textbf{54} (1975), no.~3, 259--284. \MR{0454884}

\bibitem{GuXu12}
J.~R. Gu and H.~W. Xu, \emph{The sphere theorems for manifolds with positive
  scalar curvature}, J. Differential Geom. \textbf{92} (2012), no.~3, 507--545.
  \MR{3005061}

\bibitem{GueSav04}
P.~Guerini and A.~Savo, \emph{Eigenvalue and gap estimates for the {L}aplacian
  acting on {$p$}-forms}, Trans. Amer. Math. Soc. \textbf{356} (2004), no.~1,
  319--344. \MR{2020035}

\bibitem{Jos11}
J.~Jost, \emph{Riemannian geometry and geometric analysis}, sixth ed.,
  Universitext, Springer, Heidelberg, 2011. \MR{2829653}

\bibitem{Kwo16}
K.~K. Kwong, \emph{Some sharp {H}odge {L}aplacian and {S}teklov eigenvalue
  estimates for differential forms}, Calc. Var. Partial Differential Equations
  \textbf{55} (2016), no.~2, Art. 38, 14. \MR{3478292}

\bibitem{LawMic89}
H.~B. Lawson, Jr. and M.~L. Michelsohn, \emph{Spin geometry}, Princeton
  Mathematical Series, vol.~38, Princeton University Press, Princeton, NJ,
  1989. \MR{1031992}

\bibitem{LawSim73}
H.~B. Lawson, Jr. and J.~Simons, \emph{On stable currents and their application
  to global problems in real and complex geometry}, Ann. of Math. (2)
  \textbf{98} (1973), 427--450. \MR{0324529}

\bibitem{RauSav11}
S.~Raulot and A.~Savo, \emph{A {R}eilly formula and eigenvalue estimates for
  differential forms}, J. Geom. Anal. \textbf{21} (2011), no.~3, 620--640.
  \MR{2810846}

\bibitem{RauSav12}
\bysame, \emph{On the first eigenvalue of the {D}irichlet-to-{N}eumann operator
  on forms}, J. Funct. Anal. \textbf{262} (2012), no.~3, 889--914. \MR{2863852}

\bibitem{Sav05}
A.~Savo, \emph{On the first {H}odge eigenvalue of isometric immersions}, Proc.
  Amer. Math. Soc. \textbf{133} (2005), no.~2, 587--594. \MR{2093083}

\bibitem{Sav09}
\bysame, \emph{On the lowest eigenvalue of the {H}odge {L}aplacian on compact,
  negatively curved domains}, Ann. Global Anal. Geom. \textbf{35} (2009),
  no.~1, 39--62. \MR{2480663}

\bibitem{Sav14}
\bysame, \emph{The {B}ochner formula for isometric immersions}, Pacific J.
  Math. \textbf{272} (2014), no.~2, 395--422. \MR{3284892}

\bibitem{ShiXu97}
K.~Shiohama and H.~W. Xu, \emph{The topological sphere theorem for complete
  submanifolds}, Compositio Math. \textbf{107} (1997), no.~2, 221--232.
  \MR{1458750}

\bibitem{Smo02}
K.~Smoczyk, \emph{Note on the spectrum of the {H}odge-{L}aplacian for
  {$k$}-forms on minimal {L}egendre submanifolds in {$S^{2n+1}$}}, Calc. Var.
  Partial Differential Equations \textbf{14} (2002), no.~1, 107--113.
  \MR{1883602}

\bibitem{Xin84}
Y.~L. Xin, \emph{An application of integral currents to the vanishing
  theorems}, Sci. Sinica Ser. A \textbf{27} (1984), no.~3, 233--241.
  \MR{763966}

\bibitem{XuGu13}
H.~W. Xu and J.~R. Gu, \emph{Geometric, topological and differentiable rigidity
  of submanifolds in space forms}, Geom. Funct. Anal. \textbf{23} (2013),
  no.~5, 1684--1703. \MR{3102915}

\bibitem{XuLenGu14}
H.~W. Xu, Y.~Leng, and J.~R. Gu, \emph{Geometric and topological rigidity for
  compact submanifolds of odd dimension}, Sci. China Math. \textbf{57} (2014),
  no.~7, 1525--1538. \MR{3213887}

\end{thebibliography}

\end{document}